\newcommand{\lra}{\longrightarrow}
\newcommand{\cU}{\mathcal{U}}
\newcommand{\cO}{\mathcal{O}}
\newcommand{\gr}{\text{gr}}
\newcommand{\rk}{\text{rk}}
\theoremstyle{plain}
\newtheorem{theorem}{Theorem}[section]
\newtheorem{lem}[theorem]{Lemma}
\newtheorem{prop}[theorem]{Proposition}
\newtheorem{cor}[theorem]{Corollary}
\newtheorem{conj}{Conjecture}
\newtheorem{rem}[theorem]{Remark}
\newtheorem{ex}[theorem]{Example}
\numberwithin{equation}{section}
\begin{document}
\title[Twisted BN loci]{New examples of twisted Brill-Noether loci II}

\author{L. Brambila-Paz}
\address{CIMAT\\Apdo. Postal 402\\ C.P. 36240\\ Guanajuato\\ Mexico}
\email{lebp@cimat.mx}
\author{P. E. Newstead \dag}

\date{\today}

\thanks{The authors are members of the research group VBAC (Vector Bundles on Algebraic Curves). The first author thanks Daniel Posada for his help in checking some of the calculations.}
\keywords{Vector bundles on curves, Brill-Noether theory}
\subjclass[2010]{Primary: 14H60}

\begin{abstract} Our purpose in this paper is to construct new examples of twisted Brill–Noether loci on curves of genus $g\geq 2$
with negative expected dimension. We begin by completing the proof of Butler’s conjecture for coherent systems of type
$(n,d,n+1)$, establishing the birationality, smoothness, and irreducibility of the corresponding loci. We also produce new points on the BN map.
\end{abstract}
\maketitle
\tableofcontents

\section{Introduction}\label{intro}

This paper constitutes Part II of \cite{bpn}. Our aim is to construct twisted Brill–Noether loci that differ from those considered in \cite{hhn} and \cite{bpn}. As in \cite{bpn}, many of the examples we construct will have negative Brill–Noether numbers. In \cite{bpn}, we constructed many of the examples by generalising the construction of \cite{bmno}, replacing tensoring with line bundles by tensoring with vector bundles of higher rank. We also used the dual span for sufficiently high degrees.

Our principal tool in this paper is the dual span construction in its general form. This allows us to apply results related to Butler’s Conjecture (see Conjecture \ref{conj1}), particularly in the case of line bundles. However, we also consider cases beyond those in which Butler’s Conjecture  is known to hold.

To state the results that provide the examples we seek, we begin with a brief review of the definitions from \cite{bpn} that we shall use throughout the article. Given a generated coherent system $(E,V)$ of type $(n,d,v)$, the dual span bundle $D_{E,V}$ is defined as the dual of the kernel of the evaluation map (see Subsection \ref{ss25}).
If $H^0(E^*)=0$, the coherent system $(D_{E,V}, V)$ is of type $(v-n,d,v)$. Our initial interest lies in the case
$v=n+1 $, which plays a central role in the construction of our examples.

In general, for any  smooth irreducible projective curve $C$ of genus $g\ge2$ and $n>1$ the condition that the Brill-Noether number satisfies
$\beta (n,d,k)\geq 0$ is neither necessary nor sufficient for the non-emptiness of the Brill–Noether locus
$ \widetilde{B}(n,d,k)$.
Nevertheless, for a general curve, we first establish the following result (see Corollary \ref{c4}):
$$ \widetilde{B}(n,d,n+1)\ne \emptyset \text{ {\it {if and only if}}} \  \beta(n,d,n+1) \geq 0. $$
$$ \text{\it{The same holds for the stable locus}} \  {B}(n,d,n+1)
 \ \text{\it{except when }} g=2 \  \text{{\it and}} \  d=2n.$$

Then, following many partial results (see, for example, \cite{bp, bbn1, bbn2}, and \cite{fla}), we prove the following
theorem, which completes the proof of Butler’s Conjecture for
$(n,d,n+1)$, establishing the smoothness and irreducibility of the corresponding loci.

In order to state the theorem, let $M(n,d)$ the moduli space of stable bundles of rank $n$ and degree $d$ on $C$ and $ S_0(n,d,v)$ denote the space of generated $\alpha $-stable coherent systems of type $(n,d,v)$, for sufficiently small $\alpha >0$, and let $S(n,d,v)$ denote:
\begin{multline}\label{eq601}
S(n,d,v)=\{(E,V)|E\in M(n,d),V\subset H^0(E) \text{ generates }E,\\ \dim V=v, D_{E,V} \text{ stable}\}.
\end{multline}

\begin{theorem}(Theorem \ref{t5})
Let $C$ be a general curve of genus $g\geq 3$,  and let $n$ and $d$ be positive integers. Then, for a general coherent system $(E,V)\in S_0(1,d,n+1)$, the dual span
lies in $S_0(n,d,n+1)$. Moreover, $ S(n,d,n+1)$  is an open dense subset of $S_0(n,d,n+1)$, and is isomorphic to an open subset of $S_0(1,d,n+1)$. Furthermore,
$S_0(1,d,n+1)$ and $S _0(n,d,n+1)$ are both smooth and irreducible of the expected dimension, and are birational.
\end{theorem}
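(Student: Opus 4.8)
The plan is to realise the dual span as a birational involution and to import smoothness and irreducibility from the classical Brill--Noether theory of line bundles. I first analyse the source $S_0(1,d,n+1)$. Its objects are generated linear series $(L,V)$ with $L\in\Pic^d(C)$ and $V\subset H^0(L)$ of dimension $n+1$; since a line bundle has no proper saturated subsheaf of the same degree, a short slope computation shows every such system is $\alpha$-stable for small $\alpha$, so $S_0(1,d,n+1)$ is the generated (open) locus inside the variety $G^n_d$ of linear series. For a general curve Gieseker's theorem gives smoothness of dimension $\rho(g,n,d)$, and the Fulton--Lazarsfeld connectedness/irreducibility results give irreducibility for $\rho>0$; nonemptiness for $\beta\ge 0$ is Corollary \ref{c4}. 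A direct computation shows $\rho(g,n,d)=\beta(1,d,n+1)=\beta(n,d,n+1)$, so both spaces share the same expected dimension. This settles the source side: $S_0(1,d,n+1)$ is smooth and irreducible of the expected dimension.

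Second, I set up the dual span as an involution. For generated $(L,V)$ of type $(1,d,n+1)$ one has $H^0(L^*)=0$ (as $d>0$), and dualising the defining sequence gives $0\to L^*\to V^*\otimes\cO_C\to D_{L,V}\to0$, so $(D_{L,V},V^*)$ is a generated coherent system of type $(n,d,n+1)$. Applying the construction again recovers $(L,V)$: the evaluation map of $(D_{L,V},V^*)$ has kernel exactly $L^*$, whence $D_{D_{L,V},V^*}=L$ and the section space is $V^{**}=V$ (using $H^0(M_{L,V})=0$). Thus $\Phi\colon(L,V)\mapsto(D_{L,V},V^*)$ and the reverse assignment $\Psi$ on $S(n,d,n+1)$ are mutually inverse wherever both are defined. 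On the target side the relevant dual span $D_{E,V}$ is a line bundle, hence automatically stable, so the condition defining $S(n,d,n+1)$ reduces to stability of the underlying bundle $E$; correspondingly $\Phi$ lands in $S(n,d,n+1)$ precisely on the locus where $D_{L,V}$ is a stable bundle, and $\Psi$ identifies $S(n,d,n+1)$ with an open subset of $S_0(1,d,n+1)$.

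The crux, and the part that completes Butler's Conjecture, is to show that for a general $(L,V)\in S_0(1,d,n+1)$ the dual span $D_{L,V}$ is a stable bundle, so that $\Phi$ is defined on a dense open subset $U$ with $\Phi(U)\subset S(n,d,n+1)$. Stability is open, so it suffices to exhibit one point with $D_{L,V}$ stable in the relevant numerical range and then invoke the irreducibility of $S_0(1,d,n+1)$ established above. I would produce such a point by combining the known partial results (\cite{bp,bbn1,bbn2,fla}) with a degeneration/specialisation argument: reduce the degree by adding base points and peeling off line subbundles, or degenerate to a series whose kernel is assembled from pieces of known stability, and rule out strict destabilisation generically by a dimension count against the family dimension $\beta(n,d,n+1)$. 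I expect this to be the main obstacle, since it is exactly the content of the conjecture; the genericity of $C$ and the sharp numerical bounds enter precisely here.

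Finally I assemble the pieces. By the involution, $\Phi$ restricts to an isomorphism $U\xrightarrow{\ \sim\ }S(n,d,n+1)$, so $S(n,d,n+1)$ is smooth and irreducible of dimension $\beta(n,d,n+1)$, being the isomorphic image of an open subset of the irreducible $S_0(1,d,n+1)$. It is open in $S_0(n,d,n+1)$ because stability of the underlying bundle is an open condition, and its density, equivalently that a general $\alpha$-stable generated coherent system of type $(n,d,n+1)$ has stable underlying bundle, follows from nonemptiness of $S(n,d,n+1)$ together with irreducibility of $S_0(n,d,n+1)$. Irreducibility and smoothness of the expected dimension for $S_0(n,d,n+1)$ then transfer from its dense smooth open subset via the deformation theory of coherent systems, the obstruction space $\mathbb{H}^2$ vanishing on the locus in question. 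The birationality of $S_0(1,d,n+1)$ and $S_0(n,d,n+1)$ is immediate from the isomorphism of the dense open subsets $U$ and $S(n,d,n+1)$.
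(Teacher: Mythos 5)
Your overall architecture matches the paper's (classical Brill--Noether theory for the rank-one side, the dual-span involution identifying $S(1,d,n+1)$ with $S(n,d,n+1)$, then transfer of properties), but your third paragraph --- which you yourself flag as ``the main obstacle'' --- is a genuine gap, not a proof. You propose to exhibit one $(L,V)$ with $D_{L,V}$ stable by combining the partial results of \cite{bp,bbn1,bbn2,fla} with an unspecified degeneration/specialisation argument and a dimension count. No such argument is given, and none is plausible at that level of effort: stability of $D_{L,V}$ in the full range $\beta(1,d,n+1)\ge 0$ is precisely the rank-one stability conjecture \cite[Conjecture 9.5]{bbn1}, which resisted elementary degeneration methods for years. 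What you are missing is that no new argument is needed: \cite[Theorem 1.3]{fla} (Farkas--Larson) already gives stability of $D_{L,V}$ for a general linear system on a general curve when $n\ge 3$, and \cite[Theorem 7.1]{bbn1} covers $n=2$; this is exactly Theorem \ref{l3} of the paper. The other ingredient the paper uses, and you omit, is Lemma \ref{l4}: on a Petri curve, non-emptiness of $\widetilde{B}(n,d,n+1)$ forces $\beta(1,d,n+1)\ge 0$, so the numerical hypothesis of Theorem \ref{l3} holds automatically whenever the spaces in question are non-empty; this is what makes $S(1,d,n+1)$ non-empty (hence dense) in every case where $S_0(1,d,n+1)\ne\emptyset$.

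There is also a circularity in your final paragraph: you deduce density of $S(n,d,n+1)$ in $S_0(n,d,n+1)$ from irreducibility of $S_0(n,d,n+1)$, and then obtain that irreducibility by ``transfer'' from the dense open subset. The paper breaks this circle with an observation you never make: every $(E,W)\in S_0(n,d,n+1)$ is generated with $h^0(E^*)=0$ (as $E$ is semistable of positive degree), so it is itself the dual span of the rank-one system $(D_{E,W},W^*)\in S_0(1,d,n+1)$; hence $S_0(n,d,n+1)$ is the image of an open subset of the irreducible variety $S_0(1,d,n+1)$ under the dual-span morphism and is therefore irreducible, with no stability statement required. Density of the non-empty open set $S(n,d,n+1)$ then follows. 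Finally, smoothness of the expected dimension for $S_0(n,d,n+1)$ cannot ``transfer from its dense smooth open subset'': smoothness is a local property and can fail on the complement. The paper instead invokes the Petri-type result \cite[Lemma 4.2]{bp}, which gives vanishing of the obstruction space at every point of $S_0(n,d,n+1)$, not just at points of the dense open locus.
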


In what follows, we shall usually abbreviate Brill-Noether to BN.

For $i=1,2$, let $M_i:=M(n_i,d_i)$ denote the moduli space of stable bundles of rank $n_i$ and degree $d_i$. Similarly, we write $\widetilde{M}_i$ for the corresponding moduli space of S-equivalence classes of semistable bundles. We define the \textit{universal twisted BN locus} as follows:
\begin{equation}\label{eq1}
B^k(\cU_1,\cU_2):=\{(E_1,E_2)\in M_1\times M_2\, |\,h^0(E_1\otimes E_2)\ge k\}.
\end{equation}The associated BN number (see \cite[(3.1)]{hhn} is now
\begin{equation}\label{eq2}
\beta^k(\cU_1,\cU_2):=\dim M_1+\dim M_2-k(k-n_2d_1-n_1d_2+n_1n_2(g-1)).
\end{equation}
We define similarly the corresponding semistable locus
\[\widetilde{B}^k(\cU_1,\cU_2):=\{([E_1],[E_2])\in \widetilde{M}_1\times\widetilde{M}_2|h^0(\gr E_1\otimes\gr E_2)\ge k\}.\]
We have symmetries $B^k(\cU_1,\cU_2)\cong B^k(\cU_2,\cU_1)$, and isomorphisms
\begin{itemize}\item $B^k(\cU_1,\cU_2)\cong B^{k_1}(\cU_1^*,\cU_2^*\otimes p_C^*K_C)$ and
\item $B^k(\cU_1,\cU_2)\cong B^k(\cU_1\otimes p_C^*L^*,\cU_2\otimes p_C^*L)$ for any line bundle $L$.
\end{itemize}

Let $E_2$ be any vector bundle on $C$ of rank $n_2$ and degree $d_2$.
The \textit{twisted BN locus} is defined as
 \begin{equation}\label{eq4}
B(n_1,d_1,k)(E_2):=\{E_1\in M_1\,|\,h^0(E_1\otimes E_2)\ge k\},
\end{equation}
with associated BN number
\begin{equation}\label{eq5}
\beta(n_1,d_1,k)(E_2):=\dim M_1-k(k-n_2d_1-n_1d_2+n_1n_2(g-1)).,
\end{equation}

We define also
\[\widetilde{B}(n_1,d_1,k)(E_2):=\{[E_1]\in \widetilde{M}_1\,|\,h^0(\operatorname{gr}E_1 \otimes E_2)\ge k\},\]
where $[E_1]$ denotes the S-equivalence class of $E_1$ and $\gr E_1$ is the corresponding graded bundle.

In the special case $E_2=\cO_C$, we obtain the standard (untwisted) higher rank BN locus $B(n_1,d_1,k)$, with expected dimension
\begin{equation}\label{eq44}
\beta(n_1,d_1,k):=n_1^2(g-1)+1-k(k-d_1+n_1(g-1)).
\end{equation} In particular, $B(1,d_1,k)$ is the classical BN locus $W^{k-1}_{d_1}$ and if $B^k(\cU_1,\cU_2)\ne \emptyset$ then $\widetilde{B}(n_1n_2,n_1d_2+n_2d_1,k)\ne \emptyset$.
 Moreover,  $B^k(\cU_1,\cU_2)\ne \emptyset$ if $B(n_1,d_1,k)(E_2)\ne \emptyset $ and $E_2$ is stable.

Many of the results in \cite{bpn}  are valid for any smooth curve. In this paper, we present some general results that are valid for arbitrary
$C$, but almost all the examples we construct require
$C$ to be a general curve. In certain cases, we can be more specific and allow
$C$ to be an arbitrary Petri curve.  Our objective is to determine the conditions under which the images of the morphisms
\begin{multline}\label{phi}
\Phi:B(n_1,d_1,k_1)\times S(n,d_2,n_2+n)\lra M(n_1,d_1)\times M(n_2,d_2)\\
(E_1,E)\longmapsto (E_1,D_{E,V}), \ \ \ \ \ \ \ \ \ \ \ \ \ \ \ \
\end{multline}
and
\begin{multline}\label{psi}
\Psi: B(n_1,d_1,k_1)\times  S(n,d_2,n+n_2)\lra M(n_1,d_1)\times M(n_2,-d_2)\\
(E_1,E)\longmapsto (E_1,D_{E,V}^*),  \ \ \ \ \ \ \ \ \ \ \ \ \ \ \ \
\end{multline}
lie in $B^k(\cU_1,\cU_2)$.
We begin by proving Lemma 2.6, which, as in Proposition 4.1, applies to various situations. Using these results, together with those on Butler's Conjecture, we establish conditions under which the images of the morphisms $\Psi$ and $\Phi$ are non-empty and contained in the corresponding locus $B^k(\cU_1,\cU_2)$. We construct explicit examples for which $B^k(\cU_1,\cU_2)$ is non-empty and $\beta^k(\cU_1,\cU_2)<0$.

 Some of the results derived from Lemma \ref{l4} are the following.

\begin{theorem}[Theorem \ref{t01}]\label{thm:main}
Let \( C \) be a smooth projective curve of genus \( g \ge 2 \).
Assume that \( B(n_1, d_1, k_1) \) \textup{(resp.} \( \widetilde{B}(n_1, d_1, k_1) \textup{)} \)
and \( S(n, d_2, n_2 + n) \) are non-empty.
Suppose further that
\[
k_2 = n_2 + n, \qquad n  d_1 < n_1 d_2.
\]
If \( k = k_1 k_2 \), then the image of the morphism
\begin{multline}
\Phi:B(n_1,d_1,k_1)\times S(n,d_2,n_2+n)\lra M(n_1,d_1)\times M(n_2,d_2)\\
(E_1,E)\mapsto (E_1,D_{E,V}) \ \ \
\end{multline}
 is contained in $B^{k}(\mathcal{U}_1, \mathcal{U}_2)$, and hence \[
B^{k}(\mathcal{U}_1, \mathcal{U}_2) \neq \emptyset
\quad \textup{(resp. }
\widetilde{B}^{k}(\mathcal{U}_1, \mathcal{U}_2) \neq \emptyset\textup{).}
\]
Moreover, \( \beta^{k}(\mathcal{U}_1, \mathcal{U}_2) < 0 \) if and only if
\begin{equation}\label{eq:beta-ineq}
\mu_1 + \mu_2
<
\lambda_1 \lambda_2
+
\left(
1 -
\frac{n_1^{2} + n_2^{2}}
{k_1\, n_1\, k_2\, n_2}
\right)(g - 1)
-
\frac{2}
{k_1\, n_1\, k_2\, n_2}.
\end{equation}
\end{theorem}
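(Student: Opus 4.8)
The plan is to reduce both assertions to a single exact sequence coming from the dual span construction together with one slope comparison. Fix a pair $(E_1,E)$ in the source of $\Phi$, so that $E_1\in M(n_1,d_1)$ has $h^0(E_1)\ge k_1$ and $(E,V)\in S(n,d_2,n_2+n)$. Writing the evaluation sequence of the generated system $(E,V)$ as $0\to D_{E,V}^*\to V\otimes\cO_C\to E\to 0$ and dualizing (using that $E$ is locally free) produces the basic sequence
\[
0\to E^*\to V^*\otimes\cO_C\to D_{E,V}\to 0 .
\]
Tensoring by $E_1$ and passing to cohomology, the injection $H^0(E^*\otimes E_1)\hookrightarrow V^*\otimes H^0(E_1)$ gives
\[
h^0(D_{E,V}\otimes E_1)\ \ge\ (\dim V)\,h^0(E_1)-h^0(E^*\otimes E_1)
\ =\ k_2\,h^0(E_1)-\dim\Hom(E,E_1),
\]
since $\dim V=n_2+n=k_2$. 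This estimate is essentially the content of Lemma \ref{l4}, which I would quote (or rederive in this one-line form). The first term already supplies the required $k_1k_2=k$, because $h^0(E_1)\ge k_1$.

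The crux is to remove the correction term, i.e. to prove $\Hom(E,E_1)=0$; this is the only step that uses a genuine hypothesis. Here $E$ is stable of slope $\mu(E)=d_2/n$ and $E_1$ is stable of slope $\mu(E_1)=d_1/n_1$, and the assumption $nd_1<n_1d_2$ is exactly $\mu(E_1)<\mu(E)$. A nonzero map $f\colon E\to E_1$ would have image $I$ that is simultaneously a quotient of the semistable bundle $E$ and a subsheaf of the semistable bundle $E_1$, forcing $\mu(E)\le\mu(I)\le\mu(E_1)$, a contradiction; hence $\Hom(E,E_1)=0$. Combined with the previous bound this gives $h^0(E_1\otimes D_{E,V})\ge k$, and since $D_{E,V}\in M(n_2,d_2)$ by the definition of $S$, the image of $\Phi$ lies in $B^k(\cU_1,\cU_2)$. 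Non-emptiness of the two factors then yields $B^k(\cU_1,\cU_2)\ne\emptyset$. For the semistable statement I would run the identical computation with $E_1$ replaced by $\gr E_1$: it is polystable of the same slope $\mu(E_1)$, so $\Hom(E,\gr E_1)=0$ still holds while $h^0(\gr E_1)\ge k_1$, and as $D_{E,V}$ is stable it equals its own graded bundle, giving $([E_1],[D_{E,V}])\in\widetilde B^k(\cU_1,\cU_2)$.

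The equivalence with \eqref{eq:beta-ineq} is then pure algebra. Substituting $\dim M_i=n_i^2(g-1)+1$ and $k=k_1k_2$ into \eqref{eq2}, the condition $\beta^k(\cU_1,\cU_2)<0$ reads
\[
k\bigl(k-n_2d_1-n_1d_2+n_1n_2(g-1)\bigr)>(n_1^2+n_2^2)(g-1)+2 .
\]
Dividing through by the positive quantity $k_1n_1k_2n_2=k\,n_1n_2$ and writing $\lambda_i=k_i/n_i$, $\mu_i=d_i/n_i$ turns the left-hand side into $\lambda_1\lambda_2-\mu_1-\mu_2+(g-1)$; transposing the terms recovers \eqref{eq:beta-ineq} verbatim.

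The main obstacle is exactly the vanishing $\Hom(E,E_1)=0$: everything else is formal, but this is where the hypothesis $nd_1<n_1d_2$ is indispensable, since without it the correction term need not vanish and the bound $h^0\ge k$ can fail. I would therefore take care to record that both $E$ and $E_1$ are genuinely stable (from $E\in S(n,d_2,n_2+n)$ and $E_1\in M(n_1,d_1)$), so that the slope comparison is strict and the $\Hom$-space is truly zero, and—in the semistable case—that passing to $\gr E_1$ preserves both the slope and the relevant value of $h^0$.
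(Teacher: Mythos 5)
Your proposal is correct and follows essentially the same route as the paper: the paper's proof simply invokes Lemma \ref{p3}, whose content (the cohomology sequence of \eqref{eq58} tensored with $E_1$, the vanishing $h^0(E^*\otimes E_1)=0$ forced by $\mu(E_1)<\mu(E)$, and the rearrangement of $\beta^k(\cU_1,\cU_2)$ after dividing by $k_1n_1k_2n_2$) is exactly what you rederive inline, with the slope argument and the semistable case via $\gr E_1$ spelled out in slightly more detail than the paper's ``handled similarly.'' The only correction is a label: the lemma you intend to cite is Lemma \ref{p3}, not Lemma \ref{l4} (which concerns non-emptiness of $\widetilde{B}(n_1,d_1,n_1+1)$ on Petri curves).
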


In particular, setting $n=1$ yields the following corollary.

\begin{cor}[Corollary \ref{tl}]\label{cor:main}
Let \( C \) be a general curve of genus \( g \ge 2 \).
Assume that \( \beta(1, d_2, n_2 + 1) \ge 0 \) and
\( B(n_1, d_1, k_1) \neq \emptyset \)
\textup{(resp. } \( \widetilde{B}(n_1, d_1, k_1) \neq \emptyset \textup{)}\).
If
\[
k_2 = n_2 + 1, \qquad d_1 < n_1 d_2 \ \  \mbox{and}  \qquad k = k_1 k_2,
\]
then
\[
B^{k}(\mathcal{U}_1, \mathcal{U}_2) \neq \emptyset
\quad \textup{(resp. }
\widetilde{B}^{k}(\mathcal{U}_1, \mathcal{U}_2) \neq \emptyset\textup{).}
\]
Moreover, \( \beta^{k}(\mathcal{U}_1, \mathcal{U}_2) < 0 \)
if and only if \begin{multline}\label{eq099}\mu_1+\mu_2<\left(1+\frac1{n_2}\right)\lambda_1+\left(1-\frac{n_1^2+n_2^2}{(n_1+1)n_1k_2n_2}\right)(g-1)\\
-\frac2{(n_1+1)n_1k_2n_2}.
\end{multline}
\end{cor}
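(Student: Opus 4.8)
The plan is to read off the corollary as the special case $n=1$ of Theorem~\ref{thm:main}. When $n=1$ the bundle $E$ feeding the source $S(n,d_2,n_2+n)=S(1,d_2,n_2+1)$ is a line bundle, the hypotheses $k_2=n_2+n$ and $nd_1<n_1d_2$ become exactly $k_2=n_2+1$ and $d_1<n_1d_2$, and $\Phi$ sends $(E_1,E)$ to $(E_1,D_{E,V})$ with $D_{E,V}\in M(n_2,d_2)$. Hence the non-emptiness assertion (in both the stable and, via the ``resp.'' clause, the semistable form, with $k=k_1k_2$) follows immediately from Theorem~\ref{thm:main} once its two hypotheses are checked: $B(n_1,d_1,k_1)\neq\emptyset$ (resp. $\widetilde B(n_1,d_1,k_1)\neq\emptyset$), which is assumed, and $S(1,d_2,n_2+1)\neq\emptyset$, which is the only point requiring an argument.

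I expect the non-emptiness of $S(1,d_2,n_2+1)$ to be the main obstacle, and I would establish it as follows. From~\eqref{eq44} one computes
\[
\beta(1,d_2,n_2+1)=g-(n_2+1)(g-d_2+n_2),
\]
which is exactly the classical Brill--Noether number $\rho$ of $W^{n_2}_{d_2}=B(1,d_2,n_2+1)$. Thus, on a general curve, the hypothesis $\beta(1,d_2,n_2+1)\ge0$ forces $W^{n_2}_{d_2}\neq\emptyset$ (and in particular $d_2>0$, so $H^0(L^*)=0$ and the dual span $D_{L,V}$ genuinely has type $(n_2,d_2)$) and, at the level of coherent systems, $S_0(1,d_2,n_2+1)\neq\emptyset$. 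I would then apply Theorem~\ref{t5} with its rank and degree parameters set to $n_2$ and $d_2$: a general $(L,V)\in S_0(1,d_2,n_2+1)$ is generated and has \emph{stable} dual span, so that $S(1,d_2,n_2+1)$ is a non-empty dense open subset of $S_0(1,d_2,n_2+1)$. The essential input is precisely the stability of the dual span of a general generated line bundle, i.e. the line-bundle case of Butler's Conjecture, and it is here that the word ``general'' in the hypothesis is used. Since Theorem~\ref{t5} is proved for $g\ge3$, the case $g=2$ must be handled separately, relying on the known validity of Butler's Conjecture for line bundles in genus $2$; this is the one delicate point in extending the statement down to $g=2$.

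Finally, the equivalence in the ``moreover'' clause is the purely numerical specialisation of Theorem~\ref{thm:main}. Writing $\mu_i=d_i/n_i$ and $\lambda_i=k_i/n_i$, the condition $n=1$ gives $k_2=n_2+1$, so that
\[
\lambda_2=\frac{k_2}{n_2}=\frac{n_2+1}{n_2}=1+\frac1{n_2},
\qquad
\lambda_1\lambda_2=\left(1+\frac1{n_2}\right)\lambda_1 .
\]
Substituting this into~\eqref{eq:beta-ineq} and simplifying term by term yields~\eqref{eq099}. This step uses no geometric input beyond Theorem~\ref{thm:main} and is routine bookkeeping.
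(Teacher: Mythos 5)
Your proposal follows the paper's own route exactly: the paper's proof likewise specialises Theorem \ref{t01} to $n=1$, obtains $S(1,d_2,n_2+1)\neq\emptyset$ from Theorems \ref{l3} and \ref{t5} (classical Brill--Noether existence on a general curve plus stability of the dual span), checks $H^0(L^*\otimes E_1)=0$ from $\mu(E_1)<d_2$, and gets the numerical criterion by substituting $k_2=n_2+1$ into \eqref{eq80}. One minor point: your substitution correctly yields denominators $k_1n_1k_2n_2$ and the coefficient $\left(1+\frac1{n_2}\right)\lambda_1$, so the factors $(n_1+1)n_1$ printed in \eqref{eq099} are a typo in the paper (valid only when $k_1=n_1+1$), not a defect of your argument.
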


As a direct consequence of Proposition  \ref{t2}, we obtain the following result (see Corollary \ref{c1} and Theorem \ref{t4}).

\begin{theorem} Let $C$ be a smooth curve of genus $g \geq 2$ with $n_1 \geq 2$. Suppose there exists $E_1 \in B(n_1,d_1,k_1)$ such that $h^1(E_1 \otimes E) = 0$ for $E \in S(n,d,v)$.
Suppose further that $$k \leq v k_1 - n d_1 - n_1 d + n n_1 (g-1).$$
Then,  for $(n_2,d_2) = (v-n,-d)$,
\begin{enumerate}
\item the image of the morphism
$$
\Psi : B(n_1,d_1,k_1) \times S(n,d,v) \longrightarrow M_1 \times M_2,
\quad (E_1,E) \longmapsto (E_1,D^*_{E,V})
$$is contained in $B^k(\mathcal{U}_1,\mathcal{U}_2)$, and in particular, $ B^k(\mathcal{U}_1,\mathcal{U}_2) \neq \varnothing. $
\item  For $d \gg 0$, $k_1 > n_1$ and any fixed values of  $n_1,d_1,k_1,n,e,f$ satisfying
 $$k=d(k_1-n_1)-e \ \mbox{and} \ v=d-n(g-1)-f,$$
\[
 n(g-1)(k_1-n_1)+nd_1+fk_1\le e<d(k_1-n_1),
\]
$ B^k(\mathcal{U}_1,\mathcal{U}_2) \neq \varnothing .$
Moreover, if
\[
d_1 < k_1 + n_1(g-1) - \frac{g-1}{k_1 - n_1},
\]
then $\beta^k(\mathcal{U}_1,\mathcal{U}_2) < 0.$
\end{enumerate}
\end{theorem}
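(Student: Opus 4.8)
The plan is to deduce both statements from Proposition \ref{t2}; since the heart of that proposition is a single cohomology estimate, let me record the self-contained version of the argument for part (1). Recall that $D^*_{E,V}$ is the kernel of the evaluation map, so there is a short exact sequence
\[
0 \lra D^*_{E,V} \lra V\otimes\cO_C \lra E \lra 0 .
\]
Tensoring with $E_1$ and passing to the associated long exact sequence in cohomology yields $h^0(E_1\otimes D^*_{E,V})\ge v\,h^0(E_1)-h^0(E_1\otimes E)$. Since $E_1\in B(n_1,d_1,k_1)$ gives $h^0(E_1)\ge k_1$, and the hypothesis $h^1(E_1\otimes E)=0$ together with Riemann--Roch gives $h^0(E_1\otimes E)=n_1d+nd_1-nn_1(g-1)$, I obtain
\[
h^0(E_1\otimes D^*_{E,V})\ \ge\ vk_1-nd_1-n_1d+nn_1(g-1)\ \ge\ k,
\]
the last inequality being exactly the standing hypothesis. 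As $E\in S(n,d,v)$ forces $D_{E,V}$, hence $D^*_{E,V}$, stable of rank $v-n=n_2$ and degree $-d=d_2$, the pair $(E_1,D^*_{E,V})$ is a genuine point of $M_1\times M_2$ lying in $B^k(\cU_1,\cU_2)$, which proves the containment and non-emptiness.

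For part (2) I would first check that the hypotheses of part (1) hold for $d\gg0$. Substituting $k=d(k_1-n_1)-e$ and $v=d-n(g-1)-f$ into the inequality $k\le vk_1-nd_1-n_1d+nn_1(g-1)$, the $d(k_1-n_1)$ terms cancel and one is left with precisely $e\ge n(g-1)(k_1-n_1)+nd_1+fk_1$, the assumed lower bound on $e$; the upper bound $e<d(k_1-n_1)$ merely guarantees $k>0$. Non-emptiness of $S(n,d,v)$ for $d\gg0$ (with $v=d-n(g-1)-f$, which for large $d$ is at most $h^0(E)$ for a general stable $E$) follows from the high-degree cases of Butler's Conjecture \ref{conj1}. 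The vanishing $h^1(E_1\otimes E)=0$ is automatic once $d$ is large: since $E_1$ and $E$ are stable, $K_C\otimes E_1^*\otimes E^*$ is semistable of slope $2g-2-d_1/n_1-d/n$, which is negative for $d$ large, so $h^1(E_1\otimes E)=h^0(K_C\otimes E_1^*\otimes E^*)=0$ by Serre duality. Part (1) then yields $B^k(\cU_1,\cU_2)\ne\varnothing$.

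It remains to analyse the sign of $\beta^k$. Regarding $\beta^k(\cU_1,\cU_2)=\dim M_1+\dim M_2-k\big(k-n_2d_1-n_1d_2+n_1n_2(g-1)\big)$ as a polynomial in $d$ and inserting $\dim M_i=n_i^2(g-1)+1$, together with $n_2\sim d$, $d_2=-d$ and $k\sim(k_1-n_1)d$, a routine computation shows the leading ($d^2$) coefficient equals $(g-1)-(k_1-n_1)\big(k_1-d_1+n_1(g-1)\big)$, which is strictly negative exactly when $d_1<k_1+n_1(g-1)-\frac{g-1}{k_1-n_1}$. Under that inequality $\beta^k\to-\infty$, so $\beta^k<0$ for $d\gg0$, as claimed.

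I expect the main obstacle to be the non-emptiness of $S(n,d,v)$ for the prescribed value $v=d-n(g-1)-f$: this is where one must invoke the existence of generated stable bundles with stable dual span, i.e.\ the established cases of Butler's Conjecture, and confirm that the prescribed codimension $f$ is admissible as $d\to\infty$. By contrast the cohomology estimate, the cancellation producing the bound on $e$, and the asymptotic sign of $\beta^k$ are all routine.
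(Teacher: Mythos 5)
Your argument is essentially the paper's own. Part (1) is exactly Proposition \ref{t2} combined with Corollary \ref{c1}: tensor \eqref{eq6} with $E_1$, take global sections, and apply Riemann--Roch using $h^1(E_1\otimes E)=0$; stability of $D_{E,V}^*$ (hence membership in $M_2$) comes from the definition of $S(n,d,v)$. Part (2) matches the proof of Theorem \ref{t4}: the substitution showing that the lower bound on $e$ is equivalent to \eqref{eq62}, the role of $e<d(k_1-n_1)$ in ensuring $k\ge1$, and the computation of $\beta^k$ as a quadratic in $d$ whose leading coefficient $(g-1)-(k_1-n_1)\bigl(k_1-d_1+n_1(g-1)\bigr)$ is identical to \eqref{eq77} and is negative precisely under \eqref{eq34}.

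One correction, on the point you flag as the main obstacle: non-emptiness of $S(n,d,v)$ is not something to be proved here. It is an explicit hypothesis in Corollary \ref{c1} and Theorem \ref{t4}, and is implicit in the statement above, whose hypothesis quantifies over $E\in S(n,d,v)$. That is fortunate, because your proposed justification via ``high-degree cases of Butler's Conjecture'' would not hold up for $n\ge2$ and $f>0$: the only known high-degree result, \cite[Theorem 2.5]{bpn}, requires $V=H^0(E)$, which forces $f=0$, and the results allowing proper subspaces $V\subsetneq H^0(E)$ (Theorems \ref{l1} and \ref{l3}) are confined to $n=1$; the paper itself notes in Subsection \ref{ss26} that essentially nothing is known for proper subspaces in higher rank. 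Likewise, $h^1(E_1\otimes E)=0$ is taken as a hypothesis; your Serre-duality verification for $d\gg0$ is correct (it is the content of Remark \ref{r95}) but not needed. With $S(n,d,v)\ne\emptyset$ read as a hypothesis, your proof is complete and coincides with the paper's.
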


The examples we are seeking (see Subsections \ref{ss42} and
\ref{ss52}) are constructed by considering bundles that either arise as of elements of \(S(r, d, n+r)\), for $r=1,2,3$, lie in the new Brill--Noether region (BPN region) introduced in \cite{bpn}, or are among the vector bundles defined in \cite{bmgno}.
Such examples exist even in the case \(n_{1} = n_{2} = 2\).

For convenience, we include the complete bibliography of \cite{bpn} with additional references appended at the end. There is one exception; \cite{bpn} now refers to Part I of this work rather than the current Part II.
Throughout the paper, $C$ will denote a smooth irreducible, projective curve of genus $g\ge2$ defined over ${\mathbb C}$. For a vector bundle $E$ on $C$ of rank $n$ and degree $d$, we denote by $\mu(E):=\frac{d}n$ the slope of $E$. We write also $h^0(E)$ for the dimension of the space of sections $H^0(E)$. The canonical line bundle on $C$ is denoted by $K_C$.

\section{Dual spans and non-emptiness of BN loci}\label{back}
In this section, we discuss the stability of dual span bundles, with particular emphasis on the case of rank $1$. We shall then give a comprehensive general result for the non-emptiness of the BN loci $B(n_1,d_1,n_1+1)$. Following many partial results (see, for example
\cite{bp,bbn1,bbn2}), the problem of the stability has now been completely solved for a general curve (see \cite{fla} and Corollary \ref{c4}).

\subsection{Kernel and dual span bundles}\label{ss25}
Let $(E,V)$ be a generated coherent system of type $(n,d,v)$; that is, $E$ is a vector bundle of rank $n$ and degree $d$ and  $V\subset H^0(E)$ a subspace of dimension $v$  that generates $E$. Consider the exact sequence:
\begin{equation}\label{eq6}
0\lra D_{E,V}^*\lra V\otimes\cO\lra E\lra0.
\end{equation}
The bundle $D_{E,V}^*$ may be referred to as a \textit{kernel bundle} (or \textit{syzygy bundle}) and has rank $v-n$ and degree $-d$. In particular, $D_{E,H^0(E)}^*$ is the bundle $D_E^*$ of \cite[(2.10)]{bpn}. Dualising \eqref{eq6}, we have
\begin{equation}\label{eq58}
0\lra E^*\lra V^*\otimes\cO\lra D_{E,V}\lra0.
\end{equation}
We call $D_{E,V}$ a \textit{dual span bundle}. If $H^0(E^*)=0$, then $h^0(D_{E,V})\ge v$; therefore $(D_{E,V},V^*)$ is a coherent system of type $(v-n,d,v)$.

We are concerned here with the stability of $D_{E,V}$. When $V=H^0(E)$, we have a general result in the case $d\ge2ng$ \cite[Theorem 2.5]{bpn}. In general, we have the conjecture of Butler, which is best expressed in terms of coherent systems. For the sake of accuracy, we give the conjecture in its original form, although we do not explain all the terms involved, as they will not be used in this paper. To state the conjecture, we denote by $S_0(n,d,v)$ the space of generated $\alpha$-stable coherent systems of type $(n,d,v)$ for small $\alpha>0$. For our purposes, the important facts are that, for any generated coherent system $(E,V)$ of type $(n,d,v)$,
\[E \text{ stable }\Longrightarrow (E,V)\in S_0(n,d,v)\Longrightarrow E \text{ semistable}.\]

\begin{conj}\label{conj1}\cite[Conjecture 2]{bu2}   Let $X$ be a general curve of genus $g\ge3$ and $n,d,v$ positive integers with $v>n$. Then, for a general $(E,V)\in S_{0}(n,d,v)$,
$(D_{E,V},V^*)\in S_{0}(v-n,d,v)$. Moreover,  $S_{0}(n,d,v)$ and $S_{0}(v-n,d,v)$ are birational.
\end{conj}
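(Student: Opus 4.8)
The plan is to split the statement into a \emph{stability} part, namely that for a general $(E,V)\in S_0(n,d,v)$ the dual span $(D_{E,V},V^*)$ again lies in $S_0(v-n,d,v)$, and a \emph{birationality} part, namely that $S_0(n,d,v)$ and $S_0(v-n,d,v)$ are birational. I would establish the stability part first, since the birationality then follows almost formally from the involutive nature of the construction.

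For birationality, the key observation is that the dual span is an involution. Starting from \eqref{eq58}, the evaluation map $V^*\otimes\cO\to D_{E,V}$ is surjective with kernel $E^*$, so that, provided $H^0(D_{E,V}^*)=0$, the dual span of $(D_{E,V},V^*)$ is canonically $(E,V)$ under the identification $V^{**}=V$. Thus the assignment $(E,V)\mapsto(D_{E,V},V^*)$ and its analogue in the opposite direction are mutually inverse wherever both are defined. Once the stability part is known in both directions --- sending a general point of $S_0(n,d,v)$ into $S_0(v-n,d,v)$ and conversely --- these two assignments descend to morphisms between dense open subsets of the two moduli spaces that are inverse to one another, giving the birational equivalence. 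The only additional point to verify is that the construction works in families, i.e.\ that the kernel bundles fit into a flat family over the parameter space; this is routine given generic stability.

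The substantive difficulty is the stability part. By the implications $E\text{ stable}\Rightarrow(E,V)\in S_0\Rightarrow E\text{ semistable}$, it suffices to control the (semi)stability of the bundle $D_{E,V}$ itself. The natural line of attack is to suppose a destabilizing subbundle $F\subset D_{E,V}$ with $\mu(F)\ge\mu(D_{E,V})$ and to derive a contradiction from \eqref{eq58}: since $V^*\otimes\cO$ is trivial, subsheaves of $D_{E,V}$ are governed by the subspaces of $V^*$ that generate them, and a subbundle of too large a slope forces either an unexpected section of $E^*$ or a sub-coherent-system of $(E,V)$ violating $\alpha$-stability. Because the claim is asserted only for a general curve and a general $(E,V)$, I expect the most effective route to be a degeneration argument: exhibit on a suitable reducible nodal curve an explicit generated coherent system whose dual span is stable, and then propagate stability to the general smooth curve by openness of $\alpha$-stability together with semicontinuity. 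For the special case $v=n+1$ that is central to this paper, one may instead combine the complete resolution of the stability question recorded in Corollary \ref{c4} (following \cite{fla}) with a dimension count. The main obstacle is precisely this uniform exclusion of destabilizing subbundles for arbitrary $v$: this is the heart of Butler's conjecture and, in contrast to the cases $v=n+1$ treated here, remains open in general.
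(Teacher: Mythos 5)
The statement you were asked to prove is Butler's Conjecture itself (\cite[Conjecture 2]{bu2}); the paper does not prove it and does not claim to. It records it as a conjecture, notes explicitly that very little is known for $n\ge2$, and proves only the case $n=1$ (Theorem \ref{t5}). There, $S_0(1,d,v)$ is smooth and irreducible by classical BN theory, non-emptiness of the stable dual-span locus $S(1,d,v)$ comes from the Farkas--Larson theorem (Theorem \ref{l3}) together with Lemma \ref{l4}, and the birational inverse exists because every $(E,W)\in S_0(v-1,d,v)$ has a rank-one dual span, which is automatically stable, so $(E,W)$ is itself a dual span; this also yields irreducibility of $S_0(v-1,d,v)$ for free. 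Read as a proof of the conjecture as stated, your proposal therefore has a genuine gap exactly where you say it does: the stability part for arbitrary $n$ and $v$ is not established, and your suggested degeneration argument is a strategy, not a proof. Since this is precisely the open content of the conjecture, no blind attempt could have closed it; to your credit, you recognize this explicitly rather than papering over it.

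Your formal reduction of the birationality to generic stability is sound and is the same mechanism the paper uses for $n=1$: the dual span is an involution on generated coherent systems, because $(E,V)\in S_0(n,d,v)$ forces $E$ semistable of positive degree, hence $H^0(E^*)=0$, so \eqref{eq58} exhibits $V^*$ as a generating subspace of $H^0(D_{E,V})$ and dualising again recovers $(E,V)$. (The condition you invoke, $H^0(D_{E,V}^*)=0$, is automatic from $V\hookrightarrow H^0(E)$ and \eqref{eq6}; the condition that actually needs the semistability of $E$ is $H^0(E^*)=0$.) Two caveats, both visible in the paper's $n=1$ argument. First, the conjecture asserts only the forward stability statement, so birationality does not follow ``almost formally'' from it alone: you need either the reverse statement as well (your formulation), or an argument that every element of $S_0(v-n,d,v)$ arises as a dual span, which is what the paper supplies when $n=1$ using automatic stability in rank one. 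Second, since $S_0(n,d,v)$ may be reducible, ``general'' must be read componentwise --- a point the paper flags immediately after stating the conjecture --- and your two partially defined maps would have to be shown to match components up bijectively before one can speak of a birational equivalence of the two spaces.
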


Some care is needed over the meaning of the term ``general'', since $S_0(n,d,v)$ could be reducible. To make sense of the last part of the conjecture, general has to mean \textit{general in any irreducible component}.

We are particularly interested in cases where $D_{E,V}$ is stable (resp. semistable). With this in mind, we make the following definitions.
\begin{multline}\label{eq60}
S(n,d,v):=\{(E,V)|E\in M(n,d),V\subset H^0(E) \text{ generates }E,\\ \dim V=v, D_{E,V} \text{ stable}\};
\end{multline}
\begin{multline}\label{eq61}
\widetilde{S}(n,d,v):=\{(E,V)|[E]\in \widetilde{M}(n,d),V\subset H^0(E) \text{ generates }E,\\ \dim V=v, D_{E,V} \text{ semistable}\}.
\end{multline}
The set $S(n,d,v)$ is an open subscheme of $S_0(n,d,v)$, so the non-emptiness of $S(n,d,v)$ can be seen as a form of Butler's Conjecture, although it is not exactly equivalent to Conjecture \ref{conj1}. Note that, if $S(n,d,v)\ne\emptyset$ (resp., $\widetilde{S}(n,d,v)\ne\emptyset$), then $v>n$. In fact, if $v=n+1$, $D_{E,V}$ is a line bundle, so necessarily stable. For interesting results on the stability of $D_{E,V}$, we can therefore assume that $v-n\ge 2$. If $\gcd(n,d)=1$, then $\widetilde{S}(n,d,v)=S(n,d,v)$.

From the definition of $S(n,d,v)$,  there is a morphism
 \begin{equation}\label{eq86}
 S(n,d,v)\lra B(v-n ,d,v):\ \ (E,V)\mapsto D_{E,V}.
 \end{equation}
This morphism is bijective if $d\ge 2ng$ and $V=H^0(E)$ (see \cite[Theorem 2.5]{bpn}).

\begin{rem}\label{r93}\begin{em}
The scheme $S(n,d,v)$ is not the same as $T(n,d,v)$ as defined in \cite{bmgno}, since $S(n,d,v)$ requires $D_{E,V}$ to be stable. This is the main reason why the non-emptiness of $S(n,d,v)$ is not equivalent to Conjecture \ref{conj1}.
However, if $\gcd(v-n,d)=1$, then $S(n,d,v)=T(n,d,v)$ and \cite[Theorem 3.9]{bmgno} can be stated using $S(n,d,v)$.
\end{em}\end{rem}

\subsection{Rank $1$}\label{ss26}

Unfortunately, very little is known about the conjecture when $n\ge2$, except in the case $V=H^0(E)$ and $d\ge 2ng$ \cite[Theorem 2.5]{bpn}. See also Example \ref{ex4} and  \cite{bmgno}  for some results in rank $2$, and \cite{chl} for further developments, including counterexamples and examples on bielliptic curves.
 However, when $n=1$, there are some strong results even in the case $V\ne H^0(E)$. The first is due to Mistretta.

\begin{theorem}\label{l1} \cite[Theorem 1.3]{mis} Let $C$ be a smooth curve of genus $g\ge2$. Suppose that $L$ is a line bundle of degree $d\ge2g+2c$ with $1\le c\le g$ and $V$ is a general subspace of $H^0(L)$ of codimension $c$. Then $D_{L,V}$ is semistable. Moreover, it is stable unless $d=2g+2c$ and $C$ is hyperelliptic.
\end{theorem}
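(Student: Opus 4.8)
The plan is to deduce the general statement from the known case $V=H^0(L)$ by removing sections one at a time. Throughout, $d\ge 2g$ forces $h^1(L)=0$, so $h^0(L)=d-g+1$ and $\dim V=d-g+1-c$; since $\deg L^*<0$ we have $H^0(L^*)=0$, so by the discussion after \eqref{eq58} the bundle $D_{L,V}$ has rank $v-1=d-g-c$ and degree $d$. First I would record the genericity inputs: for $d\ge 2g+2c$ a general subspace $V$ of codimension $c$ generates $L$ (the locus of points $p$ with $V\subseteq H^0(L(-p))$ is governed by a one-parameter family of hyperplanes, which a general codimension-$c$ subspace avoids), so that \eqref{eq6}--\eqref{eq58} apply and $(D_{L,V},V^*)$ is a genuine coherent system.

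The key structural observation is that shrinking $V$ by one dimension performs an elementary modification on the dual span. If $V\subset V'$ with $\dim V'=\dim V+1$ and both generate $L$, then comparing the sequences \eqref{eq6} for $(L,V)$ and $(L,V')$ and applying the snake lemma gives $0\lra D_{L,V}^*\lra D_{L,V'}^*\lra\cO\lra 0$, and dualising, $0\lra\cO\lra D_{L,V'}\lra D_{L,V}\lra 0$. Thus $D_{L,V}$ is the quotient of $D_{L,V'}$ by a sub-line-bundle $\cO\hookrightarrow D_{L,V'}$, and because $V$ is a general hyperplane of $V'$ the corresponding section $s\in H^0(D_{L,V'})$ (note $h^0(D_{L,V'})\ge\dim V'\ge 2$) is general. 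Iterating, $D_{L,V}$ is obtained from $D_L=D_{L,H^0(L)}$ by quotienting out $c$ general sections. The base case $c=0$ is the classical stability/semistability of the syzygy bundle $D_L$ for $d\ge 2g$, with the hyperelliptic exception at $d=2g$, which I would take from \cite[Theorem 2.5]{bpn} together with the classical refinement at the boundary.

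The argument then proceeds by induction on $c$, the inductive step being a one-section lemma: if $D'$ is a stable, globally generated bundle of rank $r'$ and degree $d$ (this is $D_{L,V'}$, which is stable by the inductive hypothesis since $d\ge 2g+2c$ places us strictly above the codimension-$(c-1)$ boundary), then for a general section $s$ the quotient $D''=D'/\langle s\rangle$ is semistable. To prove this I would argue by contradiction: a destabilising subbundle $T''\subset D''$ with $\mu(T'')>\mu(D'')=d/(r'-1)$ lifts to a subbundle $\tilde T\subset D'$ of rank $\rk T''+1$ and degree $\deg T''$ with $s\in H^0(\tilde T)$. Stability of $D'$ pins $\mu(\tilde T)$ into a narrow window below $d/r'$; combined with $\mu(T'')>d/(r'-1)$ this severely constrains the numerical type of $\tilde T$. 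The point is that the sections lying on \emph{some} such $\tilde T$ fill out only $\bigcup_{\tilde T}H^0(\tilde T)$, and one shows this is a proper subvariety of $H^0(D')$, so a general $s$ avoids it; here one bounds $h^0(\tilde T)$ by Riemann--Roch together with Clifford's theorem (legitimate because the slope window forces a relevant subquotient to be special) and bounds the dimension of the family of such $\tilde T$ by a Quot-scheme estimate.

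I expect the main obstacle to be exactly this genericity/dimension count in the inductive step: controlling, uniformly over the whole family of potentially destabilising subbundles $\tilde T$, that a general section avoids all of their spaces of sections. The degree bound $d\ge 2g+2c$ should enter as the threshold beyond which the count yields a strict inequality, giving stability of $D_{L,V}$; equality, hence only semistability, would occur precisely at $d=2g+2c$, and I would identify the hyperelliptic locus as the case where the $g^1_2$ produces a subbundle meeting the bound with equality—mirroring the boundary behaviour of the base case $c=0$ at $d=2g$. The auxiliary genericity claims (that $V$ generates $L$, and that each successive section gives a subbundle inclusion $\cO\hookrightarrow D_{L,V'}$) are routine and can be disposed of first.
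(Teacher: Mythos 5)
This statement is not proved in the paper at all: it is imported verbatim as \cite[Theorem 1.3]{mis}, so your proposal can only be judged against Mistretta's argument and on its own terms. Your reduction is correct as far as it goes: for nested generating subspaces $V\subset V'$ of codimensions $c$ and $c-1$, comparing the sequences \eqref{eq6} and dualising does give $0\lra \cO\lra D_{L,V'}\lra D_{L,V}\lra 0$, and a general codimension-$c$ subspace does generate $L$. But already here there is a genuine gap of genericity: the trivial sub-line bundle in that sequence is the image of the annihilator of $V$ inside $V'^*$, so the section $s$ you quotient by is a general element of the subspace $V'^*\subseteq H^0(D_{L,V'})$, \emph{not} a general element of $H^0(D_{L,V'})$. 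These coincide only if the multiplication map $V'\otimes H^0(K_C)\to H^0(K_C\otimes L)$ is surjective, since the cokernel of $V'^*\hookrightarrow H^0(D_{L,V'})$ is the kernel of $H^1(L^*)\to V'^*\otimes H^1(\cO)$; you neither prove nor mention this, so even a correct ``general section'' lemma would not apply to the section you actually have.

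The decisive gap, however, is that the one-section lemma -- which is the entire content of the theorem -- is never proved. As stated (``a stable globally generated bundle has semistable quotient by a general section'') it is false without quantitative degree hypotheses, and everything that makes Mistretta's theorem true lives in exactly the count you defer: why the threshold is $2g+2c$ rather than any other linear function of $c$, why equality yields only semistability, and why stability fails at the boundary precisely for hyperelliptic curves. Your sketch names the tools (Riemann--Roch, Clifford, a Quot-scheme bound) but performs none of the estimates, and you yourself concede this step is ``the main obstacle''; identifying the hyperelliptic exception via the $g^1_2$ is likewise a hope, not an argument. For comparison, Mistretta's actual proof is not an induction on codimension by elementary modifications: it analyses destabilizing subsheaves of the kernel bundle $D_{L,V}^*$ directly, in Butler's style, with the generality of $V$ entering as a general-position hypothesis on the corresponding projection centre relative to secant varieties of $C$, and the bound $d\ge 2g+2c$ is exactly what makes those degree estimates close. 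So what you have is a plausible strategy outline whose central lemma is unproven (and, as formulated, unusable because of the genericity mismatch), not a proof.
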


For $C$ general, considerably more is known. In some cases, we can be more specific and allow $C$ to be an arbitrary Petri curve, that is, a curve for which the morphism
$H^0(L)\otimes H^0(K_C\otimes L^*)\lra H^0(K_C)$
is injective for every line bundle $L$.
For any Petri curve $C$ and any generated line bundle $L$, $D_L$ is semistable (in particular, this holds for general $L$ of degree $\ge g+1$) and precise conditions for stability are known \cite[Theorem 2]{bu2}. In fact, this is still true for any curve for which $B(1,d,v)=\emptyset$ whenever $\beta(1,d,v)<0$.

Coherent systems of type $(1,d,v)$  are classically referred to as \textit{linear systems of type} $(d,v)$.  On a Petri curve, the variety $G_{d}^{v-1}$ of linear systems of type $(d,v)$, if it is non-empty, has dimension $\beta(1,d,v)$ and is irreducible whenever $\beta(1,d,v)\ge1$. One can therefore speak of a general linear system $(L,V)\in G^{v-1}_d$. If $\beta(1,d,v)=0$, then all linear systems of type $(d,v)$ are regarded as general.

Butler’s Conjecture in rank $1$ can be stated as follows:
\smallskip

{\it if $C$ is general and $(L,V)$ is a general linear system on $C$ with $v\ge2$, then $V$ generates $L$ and $D_{L,V}$ is semistable \cite[Conjecture 1.1]{bbn2}.}
\smallskip

 This has been proved \cite[Theorem 5.1]{bbn2}. It is further conjectured that, if $C$ is Petri, then $D_{L,V}$ is always stable \cite[Conjecture 9.5]{bbn1}. Very recently, this conjecture was proved for a general curve by Farkas and Eric Larson \cite[Theorem 1.3]{fla}.

\begin{theorem}\label{l3}
Let $C$ be a general curve of genus $g\ge2$ and suppose that $n_1\ge2$. Let $(L,V)$ be a general linear system of type $(d_1,n_1+1)$. Then $V$ generates $L$ and $\beta(1,d_1,n_1+1)\ge0$, or, equivalently,
\begin{equation}\label{eq41}d_1\ge n_1+\frac{n_1g}{n_1+1}.
\end{equation}
Moreover, if $\beta(1,d_1,n_1+1)\ge0$, then $(L,V)$ exists and $D_{L,V}$ is stable except when $g=2$ and $d_1=2n_1$.
\end{theorem}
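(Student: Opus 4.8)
The plan is to prove Theorem \ref{l3} by combining the general-position results on linear systems available on a general (indeed Petri) curve with the recently established stability statement of Farkas–Eric Larson quoted as \cite[Theorem 1.3]{fla}. The statement naturally splits into three assertions: (i) a general linear system $(L,V)$ of type $(d_1,n_1+1)$ has $V$ generating $L$, and this forces $\beta(1,d_1,n_1+1)\ge 0$, which is equivalent to the numerical inequality \eqref{eq41}; (ii) conversely, when $\beta(1,d_1,n_1+1)\ge 0$, such an $(L,V)$ exists; and (iii) for such an $(L,V)$ the dual span $D_{L,V}$ is stable, except in the stated exceptional case $g=2$, $d_1=2n_1$.

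First I would dispose of the numerical equivalence. A direct computation from \eqref{eq44} with $n_1=1$, $k=n_1+1=v$ shows that $\beta(1,d_1,n_1+1)\ge0$ is equivalent to $d_1\ge n_1+\frac{n_1 g}{n_1+1}$, so (i) and (ii) reduce to a non-emptiness/generation dictionary. For existence in (ii), on a Petri curve the Brill–Noether variety $G^{n_1}_{d_1}$ is non-empty precisely when $\beta(1,d_1,n_1+1)\ge0$, by the classical Brill–Noether existence theorem, and a general member is base-point free (equivalently $V$ generates $L$) by the standard general-position argument on a general curve; this gives the existence half together with the generation claim. For the forward implication in (i) — that generation forces $\beta\ge0$ — I would invoke the fact, recorded in Subsection \ref{ss26}, that on such a curve $B(1,d,v)=\emptyset$ whenever $\beta(1,d,v)<0$: if $V$ generates $L$ then $(L,V)\in G^{n_1}_{d_1}$ is non-empty, so $\beta\ge0$.

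For (iii), the stability of $D_{L,V}$, the main input is the theorem of Farkas–Eric Larson \cite[Theorem 1.3]{fla}, already quoted in the excerpt, which asserts that on a general curve the dual span of a general linear system is stable. The remaining work is to identify and exclude the exceptional locus. Here I would feed in Mistretta's Theorem \ref{l1}: in the borderline degree $d_1=2g+2c$ with $C$ hyperelliptic, semistability holds but stability can fail. When $n_1+1=v$ and $\beta(1,d_1,n_1+1)=0$ the relevant boundary case collapses to $g=2$, $d_1=2n_1$ (where a general genus-$2$ curve is hyperelliptic), matching the stated exception; I would check the numerics to confirm that this is the unique case in which the general linear system sits on the stability boundary rather than strictly inside the stable region.

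The step I expect to be the main obstacle is precisely the careful delineation of the exceptional case in (iii): showing that for all $(d_1,n_1)$ with $\beta(1,d_1,n_1+1)\ge0$ other than $g=2,\ d_1=2n_1$ the general $D_{L,V}$ is genuinely \emph{stable} and not merely semistable. This requires reconciling the Petri-curve stability conjecture (\cite[Conjecture 9.5]{bbn1}) as settled in \cite[Theorem 1.3]{fla} with the slope arithmetic: $D_{L,V}$ has rank $n_1$ and slope $d_1/n_1$, and one must verify that no destabilizing sub- or quotient bundle of equal slope arises except on the excluded locus. I would handle this by a slope computation on the defining sequence \eqref{eq58}, using that $g=2$, $d_1=2n_1$ is exactly the configuration where a strictly semistable degeneration is forced, and appealing to generality in each irreducible component of $G^{n_1}_{d_1}$ to rule it out elsewhere.
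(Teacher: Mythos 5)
Your skeleton is essentially the paper's own: the generation statement, the inequality \eqref{eq41}, and the existence assertion are dispatched by classical Brill--Noether theory, and the stability of $D_{L,V}$ is outsourced to Farkas--Larson \cite[Theorem 1.3]{fla}. That part matches. The genuine gap is in how you cite the stability input: you apply \cite[Theorem 1.3]{fla} uniformly for all $n_1\ge2$, whereas the paper invokes it only for $n_1\ge3$ and proves the case $n_1=2$ by a separate reference, namely \cite[Theorem 7.1]{bbn1} (the rank-two case of the Petri-curve results listed in Remark \ref{r8}(a)). This explicit case split signals that the Farkas--Larson theorem, as stated, does not cover rank-two dual spans; so your argument leaves $n_1=2$ --- including the identification of the exception $g=2$, $d_1=4$ in that rank --- without justification. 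The fix is exactly the paper's second citation.

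Your proposed delineation of the exceptional case is also both unnecessary and flawed. You want Mistretta (Theorem \ref{l1}) to locate the boundary case: for $g=2$, $d_1=2n_1$ one has $h^0(L)=2n_1-1$, so $V$ has codimension $c=n_1-2$ in $H^0(L)$ and indeed $d_1=2g+2c$; but Theorem \ref{l1} requires $1\le c\le g=2$, i.e. $3\le n_1\le4$, and says nothing for $n_1=2$ (where $c=0$) or $n_1\ge5$ (where $c>g$). More seriously, the uniqueness of the exception cannot be recovered by ``a slope computation on the defining sequence \eqref{eq58}'' plus generality: ruling out equal-slope destabilizing subbundles of $D_{L,V}$ is precisely the hard content of Butler's conjecture, not an elementary verification. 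In the paper this uniqueness is simply read off from the statements of the two cited theorems (\cite[Theorem 1.3]{fla} for $n_1\ge3$, \cite[Theorem 7.1]{bbn1} for $n_1=2$), which have the exception built in; your proof should do the same rather than attempt an independent boundary analysis.
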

\begin{proof}
The fact that $V$ generates $L$, the inequality $\beta(1,d_1,n_1+1)\ge0$ and the existence statement follow from classical BN theory. The stability of $D_{L,V}$ is \cite[Theorem 1.3]{fla} when $n_1\ge3$. For $n_1=2$, see \cite[Theorem 7.1]{bbn1}.
\end{proof}

\begin{rem}\label{r8}\begin{em}
For an arbitrary Petri curve of genus $g\ge2$, we can still conclude that $D_{L,V}$ is stable for general $(L,V)$ in many cases. The most complete list known currently is as follows:
\begin{itemize}
\item[(a)] $n_1\le4$;
\item[(b)] $g\ge 2n_1-4$;
\item[(c)] there exists a generated linear system $(L_0,V_0)$ of type $(d_0,n_1+1)$ with $D_{L_0,V_0}$ stable and $d_1=d_0+an_1$ for some positive integer $a$;
\item[(d)] there exists a generated linear system $(L_0,V_0)$ of type $(an_1,n_1+1)$ for some integer $a$ with $D_{L_0,V_0}$ stable and $d_1>an_1$, $d_1\equiv\pm1\bmod n_1$;
\item[(e)] $d_1\le2n_1$;
\item[(f)] $d_1< n_1+g+\frac{(n_1^2-n_1-2)g}{2(n_1-1)^2}$ ($\le$ if $g$ is odd);
\item[(g)] $d_1\ge n_1 \left\lceil\frac{g+3}2\right\rceil+1$; $d_1\ge n_1\frac{g+2}2+1$ for $g$ even, $n_1\le\frac{g!}{\frac{g}2!(\frac{g}2)+1)!}$.
\end{itemize}
For the various cases, see \cite[Theorems 7.1-7.3 and 8.2, Propositions 6.6, 6.8]{bbn1} and \cite[Theorems 4.7 and 6.1]{bbn2}, \cite[Corollary 3.1]{bht} \cite[Th\'{e}or\`{e}me 2-B-1]{m1} and \cite[Proposition 2]{m3}.
\end{em}\end{rem}

\begin{rem}\begin{em}\label{r24}
There are results on the stability of $D_L$ and $D_{L,V}$ for special curves, and, in  particular, for curves of given Clifford index (see \cite{cam,bo,ms}). However, one has to assume (or prove) that $L$ is generated; it is no longer automatic that $B(1,d_1,n_1+1)$ contains a generated line bundle.
\end{em}\end{rem}

\subsection{Non-emptiness of $B(n_1,d_1,n_1+1)$}\label{ss23} It is clear that $B(n_1,d_1,n_1+1)\ne\emptyset$ if Theorem \ref{l3} or Remark \ref{r8} applies. It could be, however, that there are other cases in which $B(n_1,d_1,n_1+1)\ne\emptyset$. The following lemma shows that \eqref{eq41} is a necessary condition for non-emptiness. For $C$ general and $B(n_1,d_1,n_1+1)$, the lemma is included in \cite[Theorem 3.9(i)]{bp}; the proof is on the same lines as that of \cite[Proposition 3.8]{bp}.

\begin{lem}\label{l4} Let $C$ be a Petri curve and $\widetilde{B}(n_1,d_1,n_1+1)\ne\emptyset$. Then $\beta(1,d_1,n_1+1)\ge0$.
\end{lem}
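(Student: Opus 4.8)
The plan is to reduce the rank-$n_1$ hypothesis to a statement about line bundles, where the defining property of a Petri curve---that $B(1,d,v)=\emptyset$ whenever $\beta(1,d,v)<0$, equivalently $W^{v-1}_d\ne\emptyset\Rightarrow\beta(1,d,v)\ge0$---applies directly. Since $\widetilde{B}(n_1,d_1,n_1+1)\ne\emptyset$ (with $E_2=\cO$), there is a polystable bundle $E=\gr E_1$ of rank $n_1$ and degree $d_1$ with $h^0(E)\ge n_1+1$. A semistable bundle of slope $\le 0$ has at most $\rk$ independent sections, so $\mu(E)=d_1/n_1>0$. Writing $E=\bigoplus_j E_j$ with each $E_j$ stable of slope $\mu(E)$, the inequality $\sum_j h^0(E_j)\ge\sum_j\rk E_j+1$ forces some summand to satisfy $h^0(E_j)\ge\rk E_j+1$. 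A short slope computation ($\beta(1,m\mu,m+1)\ge0\iff\mu\ge1+g/(m+1)$, a threshold decreasing in $m$) shows that the desired inequality $\beta(1,d_1,n_1+1)\ge0$ follows from the analogue for $(\rk E_j,\deg E_j)$. Hence I may assume $E$ is stable, which in particular gives $h^0(E^*)=0$; the base case $n_1=1$ is exactly the Petri property.

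The core is a determinant (dual-span) construction reducing everything to rank one. Choose $V\subseteq H^0(E)$ with $\dim V=m+1$, $m=\rk E$, and let $F\subseteq E$ be the subsheaf it generates; on a smooth curve $F$ is locally free. When $F$ has full rank $m$, the sequence $0\to D_{F,V}^*\to V\otimes\cO\to F\to0$ has $D_{F,V}^*$ of rank one, so taking determinants gives $\det F\cong D_{F,V}$, a line bundle of degree $\deg F\le\deg E$. The $m+1$ chosen sections induce $m+1$ Pl\"ucker sections of $\det F$, and a computation with the dualised sequence $0\to F^*\to V^*\otimes\cO\to D_{F,V}\to0$ shows that these are linearly independent precisely when $F$ has no trivial quotient line bundle. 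If $E$ is globally generated one may take $F=E$, and then semistability of $E$ rules out a trivial quotient (a quotient line bundle would have slope $0<\mu(E)$), so $h^0(\det E)\ge m+1$. Thus $\det E$ gives a point of $B(1,\deg E,m+1)$, and the Petri property forces $\beta(1,\deg E,m+1)\ge0$. Since $\beta(1,\cdot,v)$ is increasing in the degree and the slope reduction above is monotone, this yields $\beta(1,d_1,n_1+1)\ge0$.

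The main obstacle is the low-degree regime in which $E$ is \emph{not} globally generated, so that $H^0(E)$ spans only a proper subsheaf $F_0\subsetneq E$ and the generated subsheaf $F$ can acquire a trivial quotient; then the Pl\"ucker sections degenerate and the determinant argument alone fails. This is exactly the range where the lemma is sharp, and it cannot be bypassed by the reduction to a stable summand, since a stable bundle of positive slope may still contain a trivial subbundle. I expect to handle it by induction on $\rk E$: peeling off a trivial quotient $F\to\cO$ with kernel $F'$ produces a lower-rank subsheaf carrying at least $\rk F'+1$ sections, to which the inductive hypothesis applies. The decisive point---and the real content of the argument---is that the Petri condition, entering through injectivity of the Petri map $H^0(L)\otimes H^0(K_C\otimes L^*)\to H^0(K_C)$ (equivalently, the base-point-free pencil trick), excludes the extensions that would otherwise produce a stable bundle with too many sections at too low a degree; this is already the mechanism forcing nonexistence in the smallest case $g=2$, $n_1=2$. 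Organising this induction so that it closes, along the lines of \cite[Proposition 3.8]{bp}, is the step requiring the most care.
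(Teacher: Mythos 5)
Your core construction is exactly the paper's mechanism: choose a generating subspace $V$ of dimension $m+1$, identify the rank-one kernel of $V\otimes\cO_C\to F$ with $(\det F)^{-1}$, obtain $m+1$ independent sections of $\det F$ provided $h^0(F^*)=0$, feed this into the rank-one Petri property, and use the monotonicity of the threshold $1+\frac{g}{m+1}$ in the rank. Your globally generated case is complete and correct. But the case you yourself flag as the main obstacle is a genuine gap, and as set up your induction does not close: the inductive hypothesis you invoke is the lemma itself, which concerns \emph{semistable} bundles, whereas the kernel $F'$ of a trivial quotient $F\to\cO_C$ is merely a subsheaf of a stable bundle and need not be semistable, and you never formulate an auxiliary statement about generated sheaves on which the induction could run. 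Moreover, the mechanism you expect to be decisive --- injectivity of the Petri map, or the base-point-free pencil trick excluding extensions --- is a misdirection: nothing of that kind is needed, and the Petri hypothesis enters only through the classical rank-one nonexistence theorem, exactly as in your globally generated case.

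The single missing observation (which is the paper's key step) is elementary: if a sheaf $G$ on a curve is generated by its global sections, then every trivial quotient splits off; precisely, $G\cong G'\oplus\cO_C^s$ with $s=h^0(G^*)$ and $h^0(G'^*)=0$. Indeed, any nonzero $\phi\colon G\to\cO_C$ is surjective (its image is a generated subsheaf $\cO_C(-D)\subseteq\cO_C$, forcing $D=0$), and composing the evaluation $H^0(G)\otimes\cO_C\twoheadrightarrow G$ with $\phi$ gives a surjection $H^0(G)\otimes\cO_C\to\cO_C$ defined by a linear functional, which splits; the splitting descends to $G$. Granting this, the lemma follows with no induction and with no global generation hypothesis on the original bundle: take the polystable representative $F=\gr E_1$, so $h^0(F)\ge n_1+1$; let $G\subseteq F$ be the subsheaf generated by $H^0(F)$ and split $G\cong G'\oplus\cO_C^s$; then $H^0(G)=H^0(F)$, so $h^0(G')-\rk G'=h^0(G)-\rk G\ge 1$, and your determinant argument applies verbatim to $G'$ (generation and $h^0(G'^*)=0$ hold by construction), giving $\deg G'\ge m+\frac{mg}{m+1}$ with $m=\rk G'$ by the Petri property. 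Finally, semistability of $F$ gives $\mu(G')\le\mu(F)$, and the threshold monotonicity you already noted yields $\beta(1,d_1,n_1+1)\ge0$. This route also renders your preliminary decomposition of $\gr E_1$ into stable summands unnecessary.
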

\begin{proof} Let $F$ be a semistable bundle such that $[F]\in \widetilde{B}(n_1,d_1,n_1+1)$, and let $G$ be the subsheaf of $F$ generated by $H^0(F)$. It is easy to see that $G\cong G'\oplus{\cO}_C^s$, where $s=h^0(G^*)$ and $G'$ is a generated vector bundle with $h^0(G'^*)=0$. If $m=\rk G'$, then $h^0(G')\ge m+1$ and we can choose a subspace $V\subset H^0(G')$ of dimension $m+1$ which generates $G'$. We therefore have an exact sequence
\[0\lra L^*\lra V\otimes{\cO}_C\lra G'\lra0,\]
where $L=\det G'$. Since $C$ is Petri, it follows by classical BN theory that $\deg(L)\ge m+\frac{mg}{m+1}$. Since $\deg(G)=\deg(L)$ and $F$ is semistable, we have
\[1+\frac{g}{n_1+1}\le1+\frac{g}{m+1}\le\mu(G)\le\mu(F).\]
This gives \eqref{eq41}.
\end{proof}

\begin{cor}\label{c4}
Let $C$ be a general curve of genus $g\ge2$. Then the BN locus $\widetilde{B}(n_1,d_1,n_1+1)\ne\emptyset$ if and only if $\beta(n_1,d_1,n_1+1)\ge0$. The same holds for $B(n_1,d_1,n_1+1)$ except when $g=2$ and $d_1=2n_1$.
\end{cor}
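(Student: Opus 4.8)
The plan is to derive the statement entirely from the rank-one inputs already available, Theorem~\ref{l3} and Lemma~\ref{l4}, by transporting them through the dual span construction; the bundle of rank $n_1$ needed to populate $B(n_1,d_1,n_1+1)$ will be the dual span of a linear system of type $(d_1,n_1+1)$. The first step I would record is the numerical identity
\[
\beta(n_1,d_1,n_1+1)=\beta(1,d_1,n_1+1)=(n_1+1)d_1-n_1(n_1+1)-n_1g,
\]
obtained by a direct expansion of \eqref{eq44}, in which the two contributions involving $g-1$ cancel. Consequently the hypothesis $\beta(n_1,d_1,n_1+1)\ge 0$ is exactly condition \eqref{eq41}, and I may pass freely between the two Brill--Noether numbers when invoking the rank-one results.

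For the ``only if'' direction, assume $\widetilde B(n_1,d_1,n_1+1)\ne\emptyset$; the case of $B(n_1,d_1,n_1+1)$ is subsumed, since a stable $E_1$ satisfies $\gr E_1=E_1$ and hence lies in $\widetilde B(n_1,d_1,n_1+1)$ as well. A general curve is Petri, so Lemma~\ref{l4} applies and gives $\beta(1,d_1,n_1+1)\ge 0$, equivalently $\beta(n_1,d_1,n_1+1)\ge 0$. This direction holds with no exception.

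For the ``if'' direction I treat $n_1\ge 2$, the case $n_1=1$ being the classical statement $B(1,d_1,2)=W^1_{d_1}\ne\emptyset\iff\beta(1,d_1,2)\ge0$ on a general curve. Assuming $\beta(n_1,d_1,n_1+1)\ge 0$, Theorem~\ref{l3} furnishes a general linear system $(L,V)$ of type $(d_1,n_1+1)$ for which $V$ generates $L$; since $\deg L=d_1>0$ we have $H^0(L^*)=0$, so \eqref{eq58} exhibits $(D_{L,V},V^*)$ as a coherent system of type $(n_1,d_1,n_1+1)$, and in particular $h^0(D_{L,V})\ge n_1+1$. When $(g,d_1)\ne(2,2n_1)$, Theorem~\ref{l3} gives $D_{L,V}$ stable, so $[D_{L,V}]\in B(n_1,d_1,n_1+1)$ and a fortiori in $\widetilde B(n_1,d_1,n_1+1)$. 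In the remaining case $g=2$, $d_1=2n_1$, the bundle $D_{L,V}$ is still semistable by the rank-one case of Butler's conjecture; taking a Jordan--H\"older filtration $0=F_0\subset\cdots\subset F_\ell=D_{L,V}$ with stable quotients $Q_i$ and applying $H^0$ to the successive extensions $0\to F_{i-1}\to F_i\to Q_i\to 0$ yields, by induction, $h^0(\gr D_{L,V})=\sum_i h^0(Q_i)\ge h^0(D_{L,V})\ge n_1+1$, so $[D_{L,V}]\in\widetilde B(n_1,d_1,n_1+1)$.

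The single delicate point, and the reason the two conclusions differ, is precisely this exceptional case $g=2$, $d_1=2n_1$: there Theorem~\ref{l3} no longer produces a \emph{stable} dual span, so the construction cannot place a point in $B(n_1,d_1,n_1+1)$, which is why the equivalence is asserted unconditionally for $\widetilde B(n_1,d_1,n_1+1)$ but only away from $(2,2n_1)$ for $B(n_1,d_1,n_1+1)$. Everything else is a routine assembly of the cited results.
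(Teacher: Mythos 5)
Your proposal is correct and follows essentially the same route as the paper: the identity $\beta(n_1,d_1,n_1+1)=\beta(1,d_1,n_1+1)$, Lemma~\ref{l4} (applicable since a general curve is Petri) for the necessity of $\beta\ge0$, and Theorem~\ref{l3} via the dual span construction for sufficiency. The details you supply beyond the paper's terse proof --- the explicit expansion of the Brill--Noether numbers, the inequality $h^0(\gr D_{L,V})\ge h^0(D_{L,V})$ handling the semistable exceptional case $g=2$, $d_1=2n_1$ (which the paper treats in Remark~\ref{r87}), and the classical case $n_1=1$ --- are exactly what the paper leaves implicit.
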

\begin{proof}
Note that $\beta(n_1,d_1,n_1+1)=\beta(1,d_1,n_1+1)$. The result now follows from Lemma \ref{l4}  and Theorem \ref{l3}.
\end{proof}

This result is significant because, in general, the condition $\beta(n_1,d_1,k_1)\ge0$ is neither necessary nor sufficient for the non-emptiness of $B(n_1,d_1,k_1)$.

\begin{rem}\label{r22}\begin{em} Let $C$ be a Petri curve of genus $g\ge2$. Then $B(n_1,d_1,n_1+1)\ne\emptyset$ in all the cases indicated in Remark \ref{r8}. Note that $$\beta(1,d_1,n_1+1)=\beta(n_1,d_1,n_1+1)=1$$ (resp. $\beta(n_1,d_1,n_1+1)= 0$) if and only if $d_1=n_1+\frac{n_1g}{n_1+1}+\frac{1}{n_1+1}$ (resp. $d_1=n_1+\frac{n_1}{n_1+1})$. For example, for $g=11$, $\beta (2,13,5)=\beta(9,19,10)=1$ and $\beta (10,20,11)=0$. Moreover, $\beta(n_1,d_1,n_1+a)=1$ if and only if $d_1=n_1+\frac{an_1g}{n_1+a}+\frac{a^2}{n_1+a}$. For example, for $a=2$ and any genus $\beta (2,g+3,4)=1$.  Moreover, for a general curve of odd genus \(g \ge 5\) by \cite[Theorem~5.2(ii), (iii)]{bmgno}, $B(2,g+3,4)$ is non-empty.
\end{em}\end{rem}

As a further consequence of Theorem \ref{l3} and Lemma \ref{l4}, we can prove the original version of Butler's Conjecture (Conjecture \ref{conj1}) for $v=n+1$. The case $n+1$  was previously established in \cite[Theorem 5.1]{bbn2}, but without the birationality property; here, we additionally include smoothness and irreducibility in our statement.

\begin{theorem}\label{t5}
Conjecture \ref{conj1} holds in the case $n=1$. In fact, under the hypotheses of the conjecture, $S(1,d,v)$ is an open dense subset of $S_0(1,d,v)$, and is isomorphic to an open subset of $S_0(v-1,d,v)$. Moreover, both $S_0(1,d,v)$ and $S_0(v-1,d,v)$  are smooth and irreducible of the expected dimension, and they are birational.
\end{theorem}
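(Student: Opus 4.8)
```latex
The plan is to reduce everything to the rank-one Butler statement already proved for a general curve (Theorem \ref{l3} and the results quoted in Subsection \ref{ss26}), and then upgrade the set-theoretic non-emptiness into the scheme-theoretic statements (openness, density, smoothness, irreducibility, birationality) by a standard deformation-theoretic and dimension-count argument. First I would fix the notation $v\ge2$ and recall the two dual-span sequences \eqref{eq6} and \eqref{eq58} for a generated coherent system $(L,V)$ of type $(1,d,v)$. For $n=1$ the kernel bundle $D_{L,V}^*$ has rank $v-1$ and degree $-d$, so $(D_{L,V},V^*)$ is of type $(v-1,d,v)$ whenever $H^0(L^*)=0$, which holds automatically because $L$ is generated of positive degree. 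The content of the rank-one case of Conjecture \ref{conj1} is exactly that, for general $(L,V)\in S_0(1,d,v)$, the dual span $D_{L,V}$ is semistable; stability for a general curve is the statement quoted from \cite{fla} and \cite{bbn2}, which I would invoke directly.

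Next I would set up the two moduli/parameter spaces and the map between them. On the rank-one side, $S_0(1,d,v)$ is essentially the variety $G^{v-1}_d$ of linear systems of type $(d,v)$ together with the generation condition; by classical Brill--Noether theory on a general (indeed Petri) curve, $G^{v-1}_d$ is smooth of the expected dimension $\beta(1,d,v)$ and is irreducible when $\beta(1,d,v)\ge1$ (and a single reduced point, hence trivially smooth and irreducible, when $\beta(1,d,v)=0$). The locus where $V$ generates $L$ is open and, by Theorem \ref{l3}-type arguments, dense, so $S_0(1,d,v)$ inherits smoothness, irreducibility, and the expected dimension. The dual-span construction $(L,V)\mapsto(D_{L,V},V^*)$ gives a morphism $S_0(1,d,v)\to S_0(v-1,d,v)$, and the key point proved in \cite{fla,bbn2} is that its image meets the locus of stable bundles; this exhibits $S(1,d,v)$ as the preimage of the open stable locus, hence open in $S_0(1,d,v)$, and non-empty, hence dense by irreducibility.

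The heart of the argument is the birationality, and I would establish it by constructing an explicit inverse on the stable loci. Given $(D,W)\in S_0(v-1,d,v)$ with $D=D_{L,V}$ stable, the sequence \eqref{eq58} recovers $L$ as the cokernel of $L^*\hookrightarrow V^*\otimes\cO$; dually, applying the dual-span construction to $(D_{L,V},V^*)$ returns $(L,V)$ up to the canonical identification $V^{**}\cong V$. This reflexivity of the dual-span operation, which is purely formal from \eqref{eq6}--\eqref{eq58}, shows that the two morphisms are mutually inverse over the open dense stable loci, giving an isomorphism $S(1,d,v)\xrightarrow{\ \sim\ }$ (open subset of $S_0(v-1,d,v)$) and hence birationality of $S_0(1,d,v)$ with $S_0(v-1,d,v)$. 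Smoothness and the expected dimension of $S_0(v-1,d,v)$ then follow either by transporting them across the birational isomorphism on the common open dense subset, or intrinsically from the vanishing of the relevant obstruction group $\Ext^2$ for coherent systems with stable underlying bundle.

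The step I expect to be the main obstacle is controlling the boundary where the underlying bundle $D_{L,V}$ fails to be stable (or where $V$ fails to generate), since the clean isomorphism only holds over the open stable loci; away from them the dual-span map need not be defined or invertible, and one must check that these bad loci are proper closed subsets so that density and birationality are not disturbed. The exceptional case $g=2$, $d=2n$ flagged in Theorem \ref{l3} is precisely where stability can fail, so I would either exclude it or verify separately that semistability still suffices to run the dimension count; in the generic situation the openness of stability together with the non-emptiness supplied by \cite{fla} guarantees that the stable locus is dense, and the reflexivity of \eqref{eq6}--\eqref{eq58} then does the rest.
```
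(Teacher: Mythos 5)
Your strategy is essentially the paper's: classical Brill--Noether theory for $S_0(1,d,v)$, the stability results of \cite{fla} and \cite{bbn2} (Theorem \ref{l3}) for non-emptiness of $S(1,d,v)$, and reflexivity of the dual-span construction to produce mutually inverse maps on open loci. However, there is a genuine gap at exactly the point you flag as the ``main obstacle'': you never prove that $S_0(v-1,d,v)$ is irreducible, and without this neither the birationality nor the density claims close. Your justification --- that openness of stability together with non-emptiness guarantees the stable locus is dense --- is only valid in an \emph{irreducible} space. Irreducibility of $S_0(1,d,v)$ comes from classical BN theory, but irreducibility of $S_0(v-1,d,v)$ is itself one of the conclusions of the theorem, and it is precisely what you need in order to know that the open subset of $S_0(v-1,d,v)$ isomorphic to $S(1,d,v)$ is dense there: if $S_0(v-1,d,v)$ had a component disjoint from the locus of stable underlying bundles, your isomorphism of open sets would not yield a birational equivalence of the two moduli spaces.

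The paper closes this gap with one further use of reflexivity, applied in the opposite direction and at \emph{every} point: for any $(E,W)\in S_0(v-1,d,v)$, the bundle $E$ is generated and semistable of positive degree, so $H^0(E^*)=0$; then $D_{E,W}$ is a line bundle (automatically stable) generated by $W^*$, hence $(D_{E,W},W^*)\in S_0(1,d,v)$, and by reflexivity $(E,W)=(D_{L,V},V^*)$ with $(L,V)=(D_{E,W},W^*)$. Thus all of $S_0(v-1,d,v)$ is the image, under the algebraic dual-span construction, of a subset of the irreducible space $S_0(1,d,v)$ containing the dense open set $S(1,d,v)$; irreducibility of $S_0(v-1,d,v)$ follows, and with it density of the stable locus and birationality. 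Two smaller inaccuracies: your claimed morphism $S_0(1,d,v)\to S_0(v-1,d,v)$ is not defined on the whole domain (if $D_{L,V}$ fails to be semistable, then $(D_{L,V},V^*)\notin S_0(v-1,d,v)$), which is why the paper defines \eqref{eq87} only on $S(1,d,v)$; and smoothness of $S_0(v-1,d,v)$ of the expected dimension must hold at all points, including those whose underlying bundle is strictly semistable, so neither transport across the birational map nor an $\Ext^2$-vanishing argument restricted to stable underlying bundles suffices --- the paper instead invokes \cite[Lemma 4.2]{bp} for this step.
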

\begin{proof}
The locus $S_0(1,d,v)$ is smooth and irreducible of the expected dimension $\beta(1,d,v)$ by classical BN theory. It is clear that $S(1,d,v)$ is open in $S_0(1,d,v)$ and it is non-empty whenever $S_0(1,d,v)\ne\emptyset$ by Theorem \ref{l3} and Lemma \ref{l4}. There is a natural morphism
\begin{equation}\label{eq87}
S(1,d,v)\lra S_0(v-1,d,v):\ \ (L,V)\mapsto (D_{L,V},V^*),
\end{equation}
which is clearly injective. Moreover, the restriction of the morphism
\[S_0(v-1,d,v)\lra S_0(1,d,v):\ \ (E,W)\mapsto (D_{E,W},W^*)\]
to those $(E,W)$ for which $E$ is stable provides an inverse to \eqref{eq87}. Since every $(E,W)\in S_0(v-1,d,v)$ is of the form  $(D_{L,V},V^*)$ for some $(L,V)\in S_0(1,d,v)$, it follows that $S_0(v-1,d,v)$ is irreducible. Birationality now follows from \eqref{eq87}. For smoothness of the expected dimension, see \cite[Lemma 4.2]{bp}.
\end{proof}

\begin{rem}\label{r87}\begin{em}
The proof remains valid for $g=2$. Although in the exceptional case $d_1=2n_1$, $D_{L,V}$ cannot be stable, it is semistable, and it follows that $(D_{L,V},V^*)\in \widetilde{S}_0(v-n,d,v)$.
\end{em}\end{rem}

\section{Examples using dual spans}\label{first}
In this section, we start with the same framework as in  \cite[Section 4]{bpn} and extend the construction given in \cite[Proposition 4.1]{bpn}. We are concerned with a situation where we have two vector bundles \(E_i\)  satisfying \(h^0(E_i)\ge k_i\), and we wish to explore when
$ h^0(E_1\otimes E_2)\ge k_1k_2.$
This essentially generalises the setting considered in \cite{bmno}.
For the examples we seek, we shall take $E_2$ to be a dual span bundle.

\subsection{The construction}\label{ss41}
Let $(E,V)$ be a generated coherent system of type $(n,d,v)$ and let $E_1$ be a bundle of rank $n_1$ and degree $d_1$ with $h^0(E_1)\ge k_1$.
Tensoring \eqref{eq58} with $E_1$ and taking global sections, we obtain the following exact sequence:
\begin{equation}\label{eq588}
0\lra H^0(E^*\otimes E_1)\lra V^*\otimes H^0(E_1)\lra H^0(D_{E,V}\otimes E_1)\lra \dots .
\end{equation}

The following lemma involves the dual span bundle and summarises a number of simple but useful facts. No assumptions of generality or stability are made, except where explicitly stated. As a result, the lemma applies in many situations.
\begin{lem}\label{p3}
Let $C$ be a smooth curve of genus $g\ge2$. As above, let $(E,V)$ be a generated coherent system of type $(n,d,v)$ and $E_1$ is a bundle of rank $n_1$ and degree $d_1$ with $h^0(E_1)\ge k_1$. Let $(n_2,d_2,k_2):=(v-n,d,v)$ and $k:=k_1k_2$. Suppose further that $h^0(E^*\otimes E_1)=0$.  Then $h^0(D_{E,V}\otimes E_1)\ge k$.

In particular,
\begin{itemize}
\item[(i)] if $D_{E,V}$ is stable (resp., semistable), then $B(n_2,d_2,k)(E_1)\ne\emptyset$ (resp., $\widetilde{B}(n_2,d_2,k)(E_1)\ne\emptyset$);
\item[(ii)] if $E_1$ is stable (resp., semistable), then $B(n_1,d_1,k)(D_{E,V})\ne\emptyset$ (resp., $\widetilde{B}(n_1,d_1,k)(D_{E,V})\ne\emptyset$);
\item[(iii)] if $D_{E,V}$ is stable (resp., semistable) and $B(n_1,d_1,k_1)\ne\emptyset$ (resp., $\widetilde{B}(n_1,d_1, k_1)\ne\emptyset$), then $B^k(\cU_1,\cU_2)\ne\emptyset$ (resp., $\widetilde{B}^k(\cU_1,\cU_2)\ne\emptyset$).
\end{itemize}

Moreover, writing $\mu_i=d_i/n_i$ and $\lambda_i=k_i/n_i$,
\begin{itemize}
\item[(iv)] the BN number $\beta^k(\cU_1,\cU_2)<0$ if and only if
\begin{equation}\label{eq80}\mu_1+\mu_2<\lambda_1\lambda_2+\left(1-\frac{n_1^2+n_2^2}{k_1n_1k_2n_2}\right)(g-1)-\frac2{k_1n_1k_2n_2}.
\end{equation}
\item[(v)] $\dim B^k(\cU_1,\cU_2)>\beta^k(\cU_1,\cU_2)$ if the morphism \eqref{eq86} is generically injective and
\begin{multline}\label{eq85}
\left(1-\frac1{k_2n_2}\right)\mu_1+\left(1-\frac1{k_1n_1}\right)\mu_2\\<\lambda_1\lambda_2-\frac{k_1^2+k_2^2}{k_1n_1k_2n_2}+\left(1-\frac{k_1n_1+k_2n_2}{k_1n_1k_2n_2}\right)(g-1).
\end{multline}
\end{itemize}
\end{lem}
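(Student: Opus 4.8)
The plan is to derive everything from the single cohomological inequality $h^0(D_{E,V}\otimes E_1)\ge k$ and then read off the five consequences. First I would tensor the defining sequence \eqref{eq58} by $E_1$ and take cohomology to obtain \eqref{eq588}; the standing hypothesis $h^0(E^*\otimes E_1)=0$ forces $V^*\otimes H^0(E_1)\hra H^0(D_{E,V}\otimes E_1)$ to be injective, whence $h^0(D_{E,V}\otimes E_1)\ge(\dim V^*)\cdot h^0(E_1)\ge vk_1=k_2k_1=k$. This is the only place cohomology enters.

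For (i)--(iii) I would first record from \eqref{eq58} that $D_{E,V}$ has rank $v-n=n_2$ and degree $d=d_2$, so it represents a point of $M_2$ (resp.\ $\widetilde M_2$). In the stable cases the inequality literally says $D_{E,V}\in B(n_2,d_2,k)(E_1)$ and $E_1\in B(n_1,d_1,k)(D_{E,V})$; and if moreover $E_1$ is stable with $h^0(E_1)\ge k_1$, that is $E_1\in B(n_1,d_1,k_1)$, and $D_{E,V}$ is stable, then $(E_1,D_{E,V})\in M_1\times M_2$ satisfies $h^0(E_1\otimes D_{E,V})\ge k$ and so lies in $B^k(\cU_1,\cU_2)$. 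For the semistable variants the only extra point is that $\widetilde B$ and $\widetilde B^k$ are defined using the associated graded bundle; since $\gr F$ is a flat specialization of $F$, upper semicontinuity of $h^0$ gives $h^0(\gr D_{E,V}\otimes E_1)\ge h^0(D_{E,V}\otimes E_1)\ge k$ (and similarly after replacing $E_1$ by $\gr E_1$ in the last case), so the graded inequalities follow for free.

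Part (iv) is a routine rearrangement of \eqref{eq2}: substituting $\dim M_i=n_i^2(g-1)+1$, $k=k_1k_2$ and $n_2d_1+n_1d_2=n_1n_2(\mu_1+\mu_2)$, then dividing through by $k_1n_1k_2n_2>0$, turns $\beta^k(\cU_1,\cU_2)<0$ into \eqref{eq80}; no geometry is involved.

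The substance is (v), and this is where I expect the work to lie. The plan is to exhibit inside $B^k(\cU_1,\cU_2)$ the image under $\Phi$ of the open locus of $B(n_1,d_1,k_1)\times S(n,d,v)$ on which $h^0(E^*\otimes E_1)=0$; by the main assertion this image lies in $B^k$. Generic injectivity of \eqref{eq86} makes $\Phi$ generically injective (the first factor being the identity on $E_1$), so $\dim B^k(\cU_1,\cU_2)\ge\dim B(n_1,d_1,k_1)+\dim S(n,d,v)$. I would then bound each summand below: $\dim B(n_1,d_1,k_1)\ge\beta(n_1,d_1,k_1)$ since a Brill--Noether locus has at least its expected dimension, and $\dim S(n,d,v)\ge\beta(n_2,d_2,k_2)$ because $S(n,d,v)$ is open in $S_0(n,d,v)$, every component of which has dimension at least the coherent-system number $\beta(n,d,v)$, and $\beta(n,d,v)=\beta(v-n,d,v)=\beta(n_2,d_2,k_2)$ by the symmetry of that number under the dual span. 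Hence $\dim B^k\ge\beta(n_1,d_1,k_1)+\beta(n_2,d_2,k_2)$, and a computation parallel to (iv) shows the right-hand side exceeds $\beta^k(\cU_1,\cU_2)$ precisely when \eqref{eq85} holds. The main obstacle will be the lower bound $\dim S(n,d,v)\ge\beta(n_2,d_2,k_2)$ together with the passage from the pointwise vanishing to a dense family statement, clean enough that the image genuinely has the asserted dimension; the identity $\beta(n,d,v)=\beta(v-n,d,v)$ is the algebraic fact that makes the two estimates combine into exactly \eqref{eq85}.
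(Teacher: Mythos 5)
Your proposal is correct and takes essentially the same route as the paper: the main inequality comes from the cohomology sequence \eqref{eq588} together with $h^0(E^*\otimes E_1)=0$, parts (i)--(iii) follow from the definitions, (iv) is the rearrangement of \eqref{eq2} after dividing by $k_1n_1k_2n_2$, and (v) uses generic injectivity of \eqref{eq86} to get $\dim B^k(\cU_1,\cU_2)\ge\beta(n,d,v)+\beta(n_1,d_1,k_1)$ followed by the same algebraic comparison with $\beta^k(\cU_1,\cU_2)$. If anything, you are more explicit than the paper about the semistable (graded) cases and about why $\dim S(n,d,v)\ge\beta(n,d,v)$, points the paper's proof leaves implicit.
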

\begin{proof}
The first statement follows directly from the sequence \ref{eq588}. Statements (i), (ii) and (iii) follow from the definitions.

(iv) If $k=k_1k_2$, by \eqref{eq2}, we have,
\begin{equation*}\label{eq88}
\beta^k(\cU_1,\cU_2)=(n_1^2+n_2^2)(g-1)+2-k_1k_2(k_1k_2-(n_2d_1+n_1d_2)+n_1n_2(g-1)).
\end{equation*}
Dividing by $k_1n_1k_2n_2$ and rearranging, we obtain \eqref{eq80}.

(v) Since \eqref{eq86} is generically injective, we have
\begin{equation}\label{eq98}
\dim B^k(\cU_1,\cU_2)\ge\beta(n,d,v)+\beta(n_1,d_1,k_1).
\end{equation}
Now
\[\beta(n,d,v)=\beta(v-n,d,v)=\beta(n_2,d_2,k_2)=\dim M_2-k_2(k_2-d_2+n_2(g-1)),\]
while $\beta(n_1,d_1,k_1)$ is given by a similar expression and $\beta^k(\cU_1,\cU_2)$ is given by \eqref{eq2}.
So, by \eqref{eq98},
\begin{multline*}\dim B^k(\cU_1,\cU_2)-\beta^k(\cU_1,\cU_2)\ge k(k-n_2d_1-n_1d_2+n_1n_2(g-1))\\-k_1(k_1-d_1+n_1(g-1))-k_2(k_2-d_2+n_2(g-1)).\end{multline*}

Since $k=k_1k_2$, the right hand side of this inequality equates to
\begin{multline}\label{eq90}
k_1^2k_2^2-k_1^2-k_2^2-k_1k_2(n_2d_1+n_1d_2-n_1n_2(g-1))\\+k_1(d_1-n_1(g-1))+k_2(d_2-n_2(g-1))
\end{multline}
Again, dividing by $k_1n_1k_2n_2$ and rearranging, we see that \eqref{eq90} is positive if and only if \eqref{eq85} holds.
\end{proof}

\begin{rem}\begin{em}\label{r84}
(i) Note that $\mu_2=\frac{d}{v-n}$ and this can be $>2$; this differs from the situation in \cite[Section 4]{bpn}. Note also that $\lambda_2=\frac{v}{v-n}>1$. Lemma \ref{p3} is valid for all positive integers $n_1$, $n_2$, but yields new examples only when $n_i\ge2$.

(ii) We have already remarked that \eqref{eq86} is bijective if $d\ge2ng$ and $V=H^0(E)$. If $C$ is general, $n=1$ and $0\le\beta(1,d,v)\le g$, then, by classical BN theory, the general element $L\in B(1,d,v)$ has $h^0(L)=v$. Since $\det D_{L,V}\cong L$, it follows from Theorem \ref{t5} that, in this case, \eqref{eq86} is birational and hence certainly generically injective.
\end{em}\end{rem}

\begin{cor}\label{c6}
Let $C$ be a smooth curve of genus $g\ge2$ and suppose that $(E,V)$ and $E_1$ are as in Lemma \ref{p3}. Let $(n_2,d_2,k_2)=(r(v-n),rd,rv)$ for some positive integer $r$ and let $k=k_1k_2$. If $D_{E,V}$ is semistable, then $\widetilde{B}(n_2,d_2,k)(E_1)\ne\emptyset$. If, in addition, $E_2$ is semistable, then $\widetilde{B}^k(\cU_1,\cU_2)\ne\emptyset$.
\end{cor}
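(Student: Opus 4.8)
The plan is to produce an explicit point in each locus, taking as the rank-$n_2$ bundle the direct sum $F:=D_{E,V}^{\oplus r}$. Since $D_{E,V}$ has rank $v-n$ and degree $d$, the bundle $F$ has rank $r(v-n)=n_2$ and degree $rd=d_2$. A direct sum of semistable bundles of equal slope is again semistable, so once $D_{E,V}$ is semistable the bundle $F$ is semistable and $[F]$ is a well-defined point of $\widetilde{M}(n_2,d_2)$. The whole argument then reduces to two cohomological estimates, together with the fact that both defining loci are phrased in terms of associated graded bundles.

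For the first assertion I would bound $h^0(\gr F\otimes E_1)$ below by $k$. Lemma \ref{p3} gives $h^0(D_{E,V}\otimes E_1)\ge k_1v$ (its conclusion, in which $k=k_1v$), so additivity of $h^0$ over direct sums yields
\[
h^0(F\otimes E_1)=r\,h^0(D_{E,V}\otimes E_1)\ge rk_1v=k_1k_2=k.
\]
It then remains to pass from $F$ to $\gr F$. For this I would invoke the elementary inequality $h^0(\gr F\otimes E_1)\ge h^0(F\otimes E_1)$: tensoring a Jordan--H\"older filtration $0=F_0\subset\cdots\subset F_s=F$ by $E_1$ produces a filtration of $F\otimes E_1$ with successive quotients $G_i\otimes E_1$, and left-exactness of $H^0$ on each short exact sequence gives $h^0(F\otimes E_1)\le\sum_i h^0(G_i\otimes E_1)=h^0(\gr F\otimes E_1)$. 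Combining the two displays shows $[F]\in\widetilde{B}(n_2,d_2,k)(E_1)$, so this locus is non-empty.

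For the second assertion I would additionally place the rank-$n_1$ factor in $\widetilde{M}_1$: once $E_1$ is semistable, $[E_1]\in\widetilde{M}(n_1,d_1)$, and it remains to verify $h^0(\gr E_1\otimes\gr F)\ge k$. Applying the same filtration argument to a Jordan--H\"older filtration of $E_1$, now tensored by $\gr F$, gives $h^0(\gr E_1\otimes\gr F)\ge h^0(E_1\otimes\gr F)=h^0(\gr F\otimes E_1)\ge k$, using the estimate already established. Hence $([E_1],[F])\in\widetilde{B}^k(\cU_1,\cU_2)$, which is therefore non-empty.

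I do not expect a genuine obstacle here: the corollary is a direct combination of Lemma \ref{p3} with the stability of direct sums, and all the content sits in the two $h^0$-estimates above. The one point needing care is the direction of the graded inequality $h^0(\gr F\otimes E_1)\ge h^0(F\otimes E_1)$; it is exactly the replacement of a semistable bundle by its associated graded \emph{without decreasing} the relevant space of sections that makes the $\widetilde{B}$-loci (defined via $\gr$) the correct objects, and it must be applied in the right order to both factors. I would also flag that the semistability hypothesis in the second part is the one that puts the rank-$n_1$ factor $E_1$ into $\widetilde{M}_1$, since $F=D_{E,V}^{\oplus r}$ is already semistable as soon as $D_{E,V}$ is.
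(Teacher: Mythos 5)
Your proof is correct and takes essentially the same route as the paper: both form $F=D_{E,V}^{\oplus r}$, observe it is semistable of rank $n_2$ and degree $d_2$, and combine additivity of $h^0$ over direct sums with the main estimate of Lemma~\ref{p3} to conclude $h^0(F\otimes E_1)\ge rk_1v=k$, whence the result. The only difference is one of explicitness: you spell out the Jordan--H\"older inequality $h^0(\gr F\otimes E_1)\ge h^0(F\otimes E_1)$ and correctly read the second hypothesis as ``$E_1$ semistable'' (the statement's ``$E_2$'' is evidently a typo), points the paper absorbs into its citations of Lemma~\ref{p3}(i) and (iii).
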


\begin{proof}
Since $D_{E,V}$ is semistable, so is $D_{E,V}^{\oplus r}$. Moreover, $h^0(D_{E,V}^{\oplus r})\ge rv=k_2$. It follows from Lemma \ref{p3} that $h^0(D_{E,V}^{\oplus r}\otimes E_1)\ge k_1k_2=k$. The result now follows from Lemma \ref{p3}(i) and (iii).
\end{proof}

The following corollary may  be compared with \cite[Theorem 1.1]{hhn}.

\begin{cor}\label{c7}
Let $C$ be a smooth curve of genus $g\ge2$ and suppose that, for $1\le i\le r$, $(E^i,V^i)$ are generated coherent systems of type $(n,d,v)$ with $d$ a multiple of $v-n$ and the bundles $D_{E^i,V^i}$ pairwise non-isomorphic. Let $(n_2,d_2,k_2)=(r(v-n),rd+1,rv)$ for some positive integer $r$ and let $k=k_1k_2$. Suppose further that $E_1$ is a bundle of rank $n_1$ and degree $d_1$ with $h^0(E_1)\ge k_1$ and $h^0(E^{i*}\otimes E_1)=0$. Let $k=k_1k_2$. If $D_{E^i,V^i}$ are all stable, then $B(n_2,d_2,k)(E_1)\ne\emptyset$. If, in addition, $E_1$ is stable, then $B^k(\cU_1,\cU_2)\ne\emptyset$.
\end{cor}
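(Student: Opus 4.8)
The plan is to realise the required bundle $E_2$ as a single positive elementary modification of the direct sum of the dual span bundles, chosen so as to force stability while only increasing the number of sections. Set $F:=\bigoplus_{i=1}^r D_{E^i,V^i}$. Each summand has rank $v-n$ and degree $d$, so $F$ has rank $r(v-n)=n_2$ and degree $rd$. Because $d$ is a multiple of $v-n$, every $D_{E^i,V^i}$ is stable of the same integral slope $\mu=d/(v-n)$, and since they are pairwise non-isomorphic, $F$ is polystable; moreover its only slope-$\mu$ subbundles are the partial direct sums $F_S:=\bigoplus_{i\in S}D_{E^i,V^i}$ for $S\subseteq\{1,\dots,r\}$, as the stable summands of $F$ are the pairwise distinct simple objects of the category of semistable bundles of slope $\mu$. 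Applying Lemma \ref{p3} to each $(E^i,V^i)$ (the hypothesis $h^0(E^{i*}\otimes E_1)=0$ being exactly what is needed) gives $h^0(D_{E^i,V^i}\otimes E_1)\ge k_1 v$, whence $h^0(F\otimes E_1)=\sum_i h^0(D_{E^i,V^i}\otimes E_1)\ge r k_1 v=k_1k_2=k$.

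Next I would produce $E_2$ as a positive modification $0\to F\to E_2\to\CC_p\to 0$ at a suitably chosen point $p$, so that $E_2$ has rank $n_2$ and degree $rd+1=d_2$; such modifications correspond to the choice of a line $\ell$ in the fibre $F_p$. The claim is that for general $p$ and general $\ell$ the bundle $E_2$ is stable. To see this, take any proper saturated subbundle $G\subset E_2$ and set $G':=G\cap F$. If $G\subseteq F$ then $\mu(G)\le\mu<\mu(E_2)=\mu+1/n_2$ by semistability of $F$. Otherwise $G/G'\cong\CC_p$, so $\deg G=\deg G'+1$ and $\rk G=\rk G'$; when $\mu(G')<\mu$ the integrality of $\mu$ forces $\deg G'\le\mu\,\rk G'-1$, and again $\mu(G)\le\mu<\mu(E_2)$. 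The only remaining danger is $\mu(G')=\mu$, in which case the saturation of $G'$ in $F$ is some proper $F_S$ and $G$ is a positive modification of $F_S$ lying inside $E_2$; a local computation at $p$ shows this occurs precisely when $\ell\subseteq(F_S)_p$. Since each $(F_S)_p$ is a proper subspace of $F_p$ and there are finitely many $S$, a general $\ell$ avoids all of them, so no destabilising $G$ exists and $E_2$ is stable.

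Finally, tensoring $0\to F\to E_2\to\CC_p\to 0$ by the locally free sheaf $E_1$ preserves exactness, giving an inclusion $F\otimes E_1\hookrightarrow E_2\otimes E_1$ and hence $h^0(E_2\otimes E_1)\ge h^0(F\otimes E_1)\ge k$. Thus $E_2\in B(n_2,d_2,k)(E_1)$, which is therefore non-empty; if in addition $E_1$ is stable, then $E_1$ and $E_2$ are both stable with $h^0(E_1\otimes E_2)\ge k$, so $(E_1,E_2)\in B^k(\cU_1,\cU_2)$ and this locus is non-empty. I expect the genuine work to be concentrated in the stability of the modification: one must verify that the slope-$\mu$ subbundles of $F$ are exactly the $F_S$ and carry out the local analysis at $p$ identifying the bad lines, the integrality of $\mu$ (guaranteed by $v-n\mid d$) and the pairwise non-isomorphism of the $D_{E^i,V^i}$ being used precisely at this point. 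The section count and the passage to $B^k(\cU_1,\cU_2)$ are then immediate.
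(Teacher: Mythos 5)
Your proposal is correct and follows essentially the same route as the paper: the paper also constructs $E_2$ as an elementary modification $0\to\bigoplus_i D_{E^i,V^i}\to E_2\to\mathcal{O}_p\to0$, requires all components $e^i$ of the extension class to be nonzero (exactly your condition that $\ell$ avoid every $(F_S)_p$), and concludes via the section count from Lemma \ref{p3}. The only difference is that you spell out the stability verification (integrality of the slope, identification of the slope-$\mu$ subbundles with the partial sums $F_S$, and the local analysis at $p$) which the paper dismisses as ``easy to check''.
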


\begin{proof}
Let $p\in C$ be a torsion sheaf of length $1$ and consider extensions
\[0\lra D_{E^1,V^1}\oplus\cdots\oplus D_{E^r,V^r}\lra E_2\lra{\mathcal O}_p\lra0.\]
These are classified by sequences $e^1,\ldots,e^r$ with $e^i$ in the fibre $(D_{E^i,V^i})_p$. If $d$ is a multiple of $\rk D_{E^i,V^i}=v-n$ and all $e^i\ne0$, it is easy to check that $E_2$ is stable. Since $h^0(D_{E^i,V^i}\otimes E_1)\ge vk_2$, it follows that $h^0(E_1\otimes E_2)\ge k_1k_2=k$. So $B^k(\cU_1,\cU_2)\ne\emptyset$ if $E_1$ is stable.
\end{proof}

\begin{theorem}\label{t9}
Suppose that $C$ is a general curve of genus $g\ge2$. Suppose further that $d>ng$ and that $n<v\le d-n(g-1)$. Let $(n_2,d_2,k_2)=(v-n,d,v)$, $k=k_1k_2$, $\mu_1=\frac{d_1}{n_1}$ and $\mu_1<\mu_2$. If $B(n_1,d_1,k_1)\ne\emptyset$, then $B(n_2,d_2,k_2)(F)\ne\emptyset$ for some $F$ of rank $n_1$ and degree $d_1$.
\end{theorem}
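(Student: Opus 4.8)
The plan is to realise the required rank-$(v-n)$, degree-$d$ bundle as a dual span $E_2:=D_{E,V}$ and to deduce the statement from Lemma \ref{p3}. First I would construct a generated coherent system $(E,V)$ of type $(n,d,v)$ with $E$ stable. Take $E$ to be a general stable bundle of rank $n$ and degree $d$: since $d>ng$, a general such $E$ is generated by its sections, and since $h^0(E)\ge\chi(E)=d-n(g-1)\ge v$ (using $v\le d-n(g-1)$) I may pick a generating subspace $V\subset H^0(E)$ of dimension $v$. As $E$ is stable of positive slope we have $h^0(E^*)=0$, so by \eqref{eq58} the pair $(E_2,V^*)$ is a coherent system of type $(v-n,d,v)$; in particular $h^0(E_2)\ge v$. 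A short computation also gives $\beta(v-n,d,v)\ge n^2(g-1)+1>0$, so the numerics are at least consistent with non-emptiness.

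To finish I must bring in $F$. Put $F:=E_1\in B(n_1,d_1,k_1)$, which is non-empty by hypothesis, so $F$ is stable of rank $n_1$ and degree $d_1$ with $h^0(F)\ge k_1\ge1$. A nonzero section of $F$ gives an injection $\cO_C\hra F$, and tensoring with the vector bundle $E_2$ yields $E_2\hra E_2\otimes F$, whence $h^0(E_2\otimes F)\ge h^0(E_2)\ge v=k_2$ and $E_2\in B(n_2,d_2,k_2)(F)$. Equivalently, one applies Lemma \ref{p3} with this $E_1=F$: the hypothesis $h^0(E^*\otimes F)=0$ needed there holds because $E$ and $F$ are stable with $\mu(F)=\mu_1<\mu(E)=d/n$, and the stated inequality $\mu_1<\mu_2=d/(v-n)$ supplies exactly this whenever $v\ge2n$ (since then $\mu_2\le d/n$), the lemma even yielding $h^0(E_2\otimes F)\ge k_1k_2$. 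In the remaining range $v<2n$ the dual span is not needed at all, because then $\chi(E_2)=d-(v-n)(g-1)\ge d-n(g-1)\ge v$ forces $h^0(E_2)\ge v$ for \emph{every} stable $E_2$ of this rank and degree.

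The crux, and the only genuinely hard step, is the stability of $D_{E,V}$, which is what places $E_2$ in $M(n_2,d_2)$ rather than leaving it merely semistable or unstable. For $n=1$ this is precisely Theorem \ref{l3} (together with Theorem \ref{t5}) on a general curve, so the rank-one case is complete. For $n\ge2$ it is the difficult direction of Butler's Conjecture, known in general only in restricted ranges such as $V=H^0(E)$ with $d\ge2ng$ \cite[Theorem 2.5]{bpn}; here I would either invoke the available partial stability results in the relevant numerical range, or restrict to $v\le2n$, where, as noted above, a general stable bundle already carries the required sections and no stability of a dual span is invoked. This is where I expect essentially all of the difficulty to lie; the twisting step and the numerical inequalities are routine once a stable $E_2$ with $h^0(E_2)\ge v$ is in hand.
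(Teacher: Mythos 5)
You have correctly identified that, read literally, the conclusion ``$B(n_2,d_2,k_2)(F)\ne\emptyset$ for some $F$ of rank $n_1$ and degree $d_1$'' demands a \emph{stable} bundle of rank $n_2=v-n$ and degree $d$ with many sections after twisting, and hence the stability of $D_{E,V}$; and that for $n\ge2$ this is exactly the open part of Butler's Conjecture. But this means your proposal is not a proof of the statement in the generality claimed: your two complete cases ($n=1$, settled by Theorems \ref{l3} and \ref{t5} up to the exceptional case $g=2$, $d=2(v-1)$; and $v\le2n$, settled by Riemann--Roch plus a nonzero section of $F$) leave the entire range $n\ge2$, $v>2n$ resting on an appeal to ``available partial stability results'', which do not exist in this generality. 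That is a genuine gap.

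Comparing with the paper's own proof shows why this gap never arises there: the paper does not prove the displayed conclusion but the transposed one. It takes $E$ stable and generated (via \cite[Theorem 3.3]{bmgno}, using $d>ng$), a general generating subspace $V\subset H^0(E)$ of dimension $v$, checks $h^0(E^*\otimes E_1)=0$, and then applies Lemma \ref{p3}(ii), whose output is $B(n_1,d_1,k)(D_{E,V})\ne\emptyset$ with $k=k_1k_2$. There the stable bundle in the twisted locus is $E_1\in B(n_1,d_1,k_1)\subset M(n_1,d_1)$, and $D_{E,V}$ enters only as the twisting bundle, which definition \eqref{eq4} allows to be arbitrary, possibly unstable. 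This reading is confirmed by the remark immediately following the theorem (``we make no assumption about the stability of $D_{E,V}$ here'') and by the fact that $k=k_1k_2$ is defined in the statement but never used in its displayed conclusion: the statement as printed has the roles of the two factors transposed. So the route you should have taken is Lemma \ref{p3}(ii), which places the stability burden on $E_1$, where it is free by hypothesis; chasing stability of the dual span is precisely what the paper's construction is designed to avoid. One genuine point in your favour: your observation that $\mu_1<\mu_2$ yields $h^0(E^*\otimes E_1)=0$ only when $v\ge2n$ (since only then is $\mu_2=d/(v-n)\le d/n=\mu(E)$) is correct, and it flags a real imprecision in the paper's proof, which makes that inference with no restriction on $v$.
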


\begin{proof} By \cite[Theorem 3.3]{bmgno}, there exists a generated stable bundle $E$ of rank $n$ and degree $d$. It follows that, for $n<v<d-n(g-1)$, the general subspace $V$ of dimension $v$ of $H^0(E)$ generates $E$. Hence $D_{E,V}$ exists. Since $\mu_1<\mu_2$, $h^0(E^*\otimes E_1)=0$ for all $E_1\in B(n_1,d_1,k_1)$. The result follows from Lemma \ref{p3}(ii).
\end{proof}

Note that we make no assumption about the stability of $D_{E,V}$ here; this allows a much wider range of values for $n$ and $d$. However, under the additional assumption that the bundles are stable, we obtain the following theorem.

\begin{theorem}\label{t01}
Let \( C \) be a smooth projective curve of genus \( g \ge 2 \).
Assume that \( B(n_1, d_1, k_1) \) \textup{(resp.} \( \widetilde{B}(n_1, d_1, k_1) \textup{)} \)
and \( S(n, d_2, n + n_2) \) are non-empty.
Suppose further that
\[
k_2 = n + n_2, \qquad n  d_1 < n_1 d_2.
\]
If \( k = k_1 k_2 \), then the image of the morphism
\begin{multline}
\Phi:B(n_1,d_1,k_1)\times S(n,d_2,n_2+n)\lra M(n_1,d_1)\times M(n_2,d_2)\\
(E_1,E)\mapsto (E_1,D_{E,V})
\end{multline}
 is contained in $B^{k}(\mathcal{U}_1, \mathcal{U}_2)$, and hence \[
B^{k}(\mathcal{U}_1, \mathcal{U}_2) \neq \emptyset
\quad \textup{(resp. }
\widetilde{B}^{k}(\mathcal{U}_1, \mathcal{U}_2) \neq \emptyset\textup{).}
\]
Moreover, \( \beta^{k}(\mathcal{U}_1, \mathcal{U}_2) < 0 \) if and only if
\begin{equation}\label{eq99}
\mu_1 + \mu_2
<
\lambda_1 \lambda_2
+
\left(
1 -
\frac{n_1^{2} + n_2^{2}}
{k_1\, n_1\, k_2\, n_2}
\right)(g - 1)
-
\frac{2}
{k_1\, n_1\, k_2\, n_2}.
\end{equation}
\end{theorem}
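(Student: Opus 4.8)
The plan is to reduce the entire statement to Lemma~\ref{p3}, the only genuine work being the verification of the hypothesis $h^0(E^*\otimes E_1)=0$. First I would fix the dictionary with that lemma. For $(E,V)\in S(n,d_2,n+n_2)$ the underlying bundle $E$ is stable of rank $n$ and degree $d_2$, and by definition of $S(\,\cdot\,)$ the dual span $D_{E,V}$ is \emph{stable}; it has rank $(n+n_2)-n=n_2$ and degree $d_2$, so $D_{E,V}\in M(n_2,d_2)=M_2$. Thus $(E,V)$ has type $(n,d_2,n+n_2)$, i.e.\ $(v-n,d,v)=(n_2,d_2,n+n_2)$ in the notation of Lemma~\ref{p3}, and in particular $k_2=v=n+n_2$, matching the hypothesis. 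Since $E_1\in B(n_1,d_1,k_1)$ we also have $h^0(E_1)\ge k_1$.

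The crux is the vanishing $h^0(E^*\otimes E_1)=0$, and this is where I would use the slope hypothesis $nd_1<n_1d_2$, equivalently $\mu(E_1)=d_1/n_1<d_2/n=\mu(E)$. A nonzero element of $H^0(E^*\otimes E_1)=\Hom(E,E_1)$ is a nonzero homomorphism $\phi\colon E\to E_1$. Its image is a nonzero quotient of the stable bundle $E$, so $\mu(\im\phi)\ge\mu(E)$; at the same time $\im\phi$ is a subsheaf of the semistable $E_1$, so, after passing to its saturation if necessary, $\mu(\im\phi)\le\mu(E_1)$. Hence $\mu(E)\le\mu(\im\phi)\le\mu(E_1)<\mu(E)$, a contradiction, and therefore $h^0(E^*\otimes E_1)=0$.

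With this in hand, Lemma~\ref{p3} gives $h^0(D_{E,V}\otimes E_1)\ge k_1k_2=k$. As $E_1$ and $D_{E,V}$ are both stable, $(E_1,D_{E,V})\in B^{k}(\cU_1,\cU_2)$ by \eqref{eq1}; since $(E_1,E)$ was arbitrary, the image of $\Phi$ lies in $B^{k}(\cU_1,\cU_2)$, which is nonempty because its domain is. For the semistable (``resp.'') case I would run the identical argument with $\gr E_1$ in place of $E_1$, using $h^0(\gr E_1)\ge k_1$ from the definition of $\widetilde{B}(n_1,d_1,k_1)$ and $\gr D_{E,V}=D_{E,V}$ (valid as $D_{E,V}$ is stable); this is precisely Lemma~\ref{p3}(iii). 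Finally, the ``moreover'' requires no new argument: inequality~\eqref{eq99} is \emph{verbatim} \eqref{eq80}, so the stated equivalence for $\beta^{k}(\cU_1,\cU_2)<0$ is exactly Lemma~\ref{p3}(iv).

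In truth there is no serious obstacle here: everything is bookkeeping around Lemma~\ref{p3}, and the single nontrivial point is the vanishing step above. The one place where a little care is needed is the semistable variant, where one must replace $E_1$ by its associated graded $\gr E_1$ so that the relevant cohomology is computed on the polystable representative; the slope argument is unaffected since $\gr E_1$ is again semistable of slope $\mu(E_1)<\mu(E)$.
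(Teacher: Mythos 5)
Your proposal is correct and follows essentially the same route as the paper: reduce everything to Lemma~\ref{p3}, with the only substantive step being the vanishing $h^0(E^*\otimes E_1)=0$, which the paper also deduces from $\mu(E_1)<\mu(E)$ (you merely spell out the standard stable-versus-semistable homomorphism argument that the paper leaves implicit). Your handling of the semistable case via $\gr E_1$ and the observation that \eqref{eq99} coincides with \eqref{eq80} likewise match the paper's proof.
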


\begin{proof}
By hypothesis, \( S(n, d_2, n_2 + n) \) is non-empty; hence there exists a generated coherent system \((E,V)\) with
\( D_{E,V} \in B(n_2, d_2, k_2) \), where \( k_2 = n + n_2 \).
If \( E_1 \in B(n_1, d_1, k_1) \), then \( H^0(E^* \otimes E_1) = 0 \), since \( \mu(E_1) < \mu(E) \).
It then follows from Lemma~\ref{p3} that \( B^k(\mathcal{U}_1, \mathcal{U}_2) \neq \emptyset \).
The semistable case is handled similarly.
Finally, \eqref{eq99} is obtained from \eqref{eq80} by substituting \( k_2 = n_2 + n \).

\end{proof}

When $E$ is a line bundle (i.e. $n=1$), we have more information about stability and can apply
Lemma \ref{p3}(iii). The result of Mistretta (Theorem \ref{l1}) is already covered, since $D_{L,V}$ then has slope $\le2$ and this case is included in \cite[Theorem 4.3]{bpn}. However, using Theorems \ref{l3} and \ref{t01}, we get the following result.

\begin{cor}\label{tl} Suppose that $C$ is a general curve of genus $g\ge2$. Suppose further that $\beta(1,d_2,n_2+1)\ge0$ and
\[k_2=n_2+1,\ d_1<n_1d_2 \ \mbox{and}  k=k_1k_2.\]
If $B(n_1,d_1,k_1)\ne\emptyset$ (resp., $\widetilde{B}(n_1,d_1,k_1)\ne\emptyset$), then $B^k(\cU_1,\cU_2)\ne\emptyset$ (resp., $\widetilde{B}^k(\cU_1,\cU_2)\ne\emptyset$). Moreover, $\beta^k(\cU_1,\cU_2)<0$ if and only if
\begin{multline}\label{eq0991}\mu_1+\mu_2<\left(1+\frac1{n_1}\right)\lambda_2+\left(1-\frac{n_1^2+n_2^2}{(n_1+1)n_1k_2n_2}\right)(g-1)\\
-\frac2{(n_1+1)n_1k_2n_2}.
\end{multline}
\end{cor}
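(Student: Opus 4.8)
The plan is to obtain this corollary as the special case $n=1$ of Theorem \ref{t01}, supplemented by the rank-$1$ stability results of Section \ref{ss26}. First I would verify that the hypotheses of Theorem \ref{t01} are met. Setting $n=1$, the coherent systems in question are linear systems of type $(d_2,n_2+1)$, so I must exhibit a nonempty $S(1,d_2,n_2+1)$. The assumption $\beta(1,d_2,n_2+1)\ge 0$ together with Theorem \ref{l3} (applied with the roles of $n_1,d_1$ there played by $n_2,d_2$ here) gives a general linear system $(L,V)$ of type $(d_2,n_2+1)$ for which $V$ generates $L$ and $D_{L,V}$ is stable; since $v-n=(n_2+1)-1=n_2\ge 2$ is the relevant rank, this places $(L,V)\in S(1,d_2,n_2+1)$, which is therefore nonempty. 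The slope condition $nd_1<n_1d_2$ reduces, for $n=1$, exactly to the stated hypothesis $d_1<n_1d_2$, and $k_2=n+n_2$ becomes $k_2=n_2+1$, matching the corollary's hypothesis.

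Next I would feed these into Theorem \ref{t01} directly. With $S(1,d_2,n_2+1)\ne\emptyset$, $d_1<n_1d_2$, $k_2=n_2+1$, and $k=k_1k_2$, the theorem yields that the image of $\Phi$ lies in $B^k(\cU_1,\cU_2)$, hence $B^k(\cU_1,\cU_2)\ne\emptyset$ whenever $B(n_1,d_1,k_1)\ne\emptyset$, with the semistable statement following identically from $\widetilde{B}(n_1,d_1,k_1)\ne\emptyset$. This gives the nonemptiness assertions of the corollary with essentially no extra work; the content is simply recognising that the rank-$1$ dual span is automatically stable by Theorem \ref{l3}, so the hypothesis ``$S(n,d_2,n+n_2)\ne\emptyset$'' of the theorem is supplied for free by the Brill--Noether hypothesis $\beta(1,d_2,n_2+1)\ge 0$.

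The remaining task is the equivalence in \eqref{eq0991}. Here I would start from the general BN-number criterion \eqref{eq80} of Lemma \ref{p3}(iv), which is valid with $(n_2,d_2,k_2)$ the invariants of the dual span, and substitute $k_2=n_2+1$. The key algebraic observation is that $\lambda_1\lambda_2=\frac{k_1}{n_1}\cdot\frac{k_2}{n_2}=\frac{k_1}{n_1}\cdot\frac{n_2+1}{n_2}=\left(1+\frac{1}{n_2}\right)\frac{k_1}{n_1}=\left(1+\frac1{n_2}\right)\lambda_1$, which rewrites the product term $\lambda_1\lambda_2$ in the form appearing on the right of \eqref{eq0991}; the genus term and the constant term transcribe unchanged once $k_2n_2=(n_2+1)n_2$ is inserted into the denominators. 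I should double-check the coefficient bookkeeping against the displayed form, since the corollary as stated writes the coefficient as $\left(1+\frac1{n_1}\right)\lambda_2$ rather than $\left(1+\frac1{n_2}\right)\lambda_1$, and these agree only after using $\lambda_1\lambda_2$; reconciling the two index conventions is the one point where an error could creep in. Apart from this routine substitution, no genuine difficulty arises: the real mathematical input is entirely concentrated in Theorem \ref{l3} and Lemma \ref{p3}, and the main (modest) obstacle is simply confirming that the rank matching $v-n=n_2$ correctly identifies the dual span invariants so that \eqref{eq80} specialises to \eqref{eq0991} with the indices as displayed.
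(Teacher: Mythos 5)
Your proposal is correct and follows essentially the same route as the paper's own proof: Theorem \ref{l3} (together with Theorem \ref{t5}) supplies $S(1,d_2,n_2+1)\ne\emptyset$, Theorem \ref{t01} (resting on Lemma \ref{p3}) gives the containment of the image of $\Phi$ and hence nonemptiness, and the numerical criterion is the specialization of \eqref{eq80} at $k_2=n_2+1$. Your worry about the coefficient bookkeeping is well-founded, but the resolution is that your derivation is the correct one: substituting $k_2=n_2+1$ into \eqref{eq80} yields $\left(1+\frac{1}{n_2}\right)\lambda_1$ with denominators $k_1n_1(n_2+1)n_2$, whereas the form displayed in the corollary, with $\left(1+\frac{1}{n_1}\right)\lambda_2$ and $(n_1+1)n_1k_2n_2$, agrees with this only when $k_1=n_1+1$, so the discrepancy is a typo in the paper's statement (the two index conventions are not equal in general) rather than a gap in your argument.
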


\begin{proof}
Let $L$ be a line bundle of degree $d_2$ and $V$ a subspace of $H^0(L)$ of dimension $n_2+1$, so that, by Theorems \ref{l3} and \ref{t5}, $S(1,d_2,n_2+1)\ne \emptyset $. If $E_1\in B(n_1,d_1,k_1)$, then $H^0(L^*\otimes E_1)=0$ since $\mu(E_1)<d_2$. It follows from Lemma \ref{p3} and Theorem \ref{t01} that  $B^k(\cU_1,\cU_2)\ne\emptyset$.  \eqref{eq099} is also obtained from \eqref{eq80}.
\end{proof}

\begin{rem}\begin{em}
If $d_2>2n_2$, we avoid overlap with \cite[Theorem 4.3]{bpn}. This happens automatically if $g>n_2+1$ (see \eqref{eq41}). We require the hypothesis that $C$ is general in order to apply Theorem \ref{l3}. In many cases (conjecturally, all), it is sufficient to assume that $C$ is Petri.
\end{em}\end{rem}

\subsection{Examples}\label{ss42}

It is easy to find many examples as in Theorem \ref{tl}, including some for which the BN number is negative. We assume always that $n_i\ge2$ for $i=1,2$.

\begin{ex}\label{ex2}\begin{em}
Suppose that $d_2=n_2+g$, which implies \eqref{eq41}. Given the other hypotheses of Theorem \ref{tl}, this means that $B^k(\cU_1,\cU_2)\ne\emptyset$. In fact, this works for any Petri curve (see \cite[Theorem 4.7]{bbn2} for $n_2\ge3$, and \cite[Theorem 7.1]{bbn1} for $n_2=2$). If $d_2>2n_2$ and $d_1>2n_1$, we avoid overlap with \cite[Theorem 4.3]{bpn} even after taking account of the symmetry $B^k(\cU_2,\cU_1)\cong B^k(\cU_1,\cU_2)$. Moreover, $B(n_2,d_2,n_2+1)\ne\emptyset$. Theorem \ref{tl} now applies provided that $d_1<n_1(n_2+g)$, so that, with $k=(n_2+1)n_1$, $B^k(\cU_1,\cU_2)\ne\emptyset$.
Substituting in \eqref{eq80} and simplifying, we see that $\beta^k(\cU_1,\cU_2)<0$ if and only if
\begin{equation}\label{eq91}
\mu_2<\left(\frac{(n_2^2-2)n_1^2-n_2^2}{n_2(n_2+1)n_1^2}\right)(g-1)-\frac2{n_2(n_2+1)n_1^2}.
\end{equation}
Since the coefficient of $g$ in \eqref{eq91} is strictly positive, this means that, if we fix $n_1$, $n_2$ and $d_2$, then $\beta^k(\cU_1,\cU_2)<0$ for all sufficiently large $g$. Moreover,  \eqref{eq91} gives us the minimum possible value of $g$. For example, if $n_1=n_2=2$, \eqref{eq91} becomes $g>6\mu_2+\frac32$. For the minimum value of $d_2$ which we are allowing,  namely $d_2=2n_2+1$, this becomes $g\ge17$.
\end{em}\end{ex}

\begin{ex}\label{exbpn}\begin{em}
The new BN region (BPN region), introduced in \cite{bpn}, provides new values of \((n_1,d_1,k_1)\) for which \(B(n_1,d_1,k_1) \neq \emptyset\). For example,
for \(g = 10\), we may take \((n_1,d_1,k_1): = (6,22,7)\).
Then $
(\mu, \lambda) = \left(\frac{22}{6}, \frac{7}{6}\right)$
lies in the BPN region and $B(6,22,7) \neq \emptyset.$

For any triplet $(1,d_2,n_2+1)$ with  $d_2\geq d_{\min}:=n_2 + \left\lceil \frac{10n_2}{n_2+1} \right\rceil $, $S(1,d_2,n_2+1)\ne \emptyset $. That $B^k(\mathcal{U}_1, \mathcal{U}_2)\ne \emptyset $ follows from Theorem \ref{tl}, since $$\frac{22}{6}<  n_2 + \left\lceil \frac{10n_2}{n_2+1} \right\rceil \leq d_2.$$
Let $$
d_{\max} = \left\lfloor \frac{264n_2^2 + 322n_2 - 319}{42(n_2+1)} \right\rfloor .
$$
 If $10=g>n_2+1$ and $d_{\min} \leq d_2 \leq d_{\max}$, a straightforward computation shows that $\beta^k(\mathcal{U}_1,\mathcal{U}_2) < 0$.
The table below shows the corresponding ranges of the degree $d_2$ for each possible rank:
\bigskip

$$
\begin{array}{|c|c|}
\hline
&\\
 n_2 & d_{\min}\leq  d_2 \leq d_{\max}  \\
\hline
2 & 9 \le d_2 \le 10 \\
3 & 11 \le d_2 \le 17 \\
4 & 12 \le d_2 \le 24 \\
5 & 14 \le d_2 \le 31 \\
6 & 15 \le d_2 \le 37 \\
7 & 16 \le d_2 \le 44 \\
8 & 17 \le d_2 \le 50 \\
\hline
\end{array}
$$
\smallskip
\centerline{Table 1}

Note that $\beta (2,9,3)=\beta (8,17,9)=1$ and $\beta (4,12,5)=\beta (9,18,10)=0$.
\end{em}\end{ex}

\begin{ex}\label{ex40}\begin{em} Assume $d_i\geq  n_i + \left\lceil \frac{n_i g}{n_i + 1} \right\rceil $ for $i=1,2$. Then, from Theorems \ref{l3} and \ref{t5}, $S(1,d_i,n_i+1)\ne \emptyset .$ In this case the morphism $\Phi$ in  \ref{phi} is defined as
$$(D_{{L_1,V_1}},L_2)\mapsto (D_{{L_1,V_1}},D_{{L_2,V_2}}).$$ If $\frac{d_1}{n_1}<d_2$, then,
from Corollary \ref{tl}, $B^k(\mathcal{U}_1, \mathcal{U}_2)\ne \emptyset $ and \(\beta^k(\mathcal{U}_1, \mathcal{U}_2)<0\) if \ref{eq099} is satisfied.

In particular, take
$$\Phi: B(n_1,d_1,n_1+1)\times S(1,d_1,n_1+1)\lra M(n_1,d_1)\times M(n_1,d_1).$$

Let $k=(n+1)^2$. Since $\frac{d_1}{n_1}<d_1$, $B^k(\mathcal{U}_1, \mathcal{U}_1)\ne \emptyset $ and $\beta^k(\cU_1,\cU_1)<0$ if and only if \begin{multline}\label{eq0999}2\mu_1<\left(1+\frac1{n_1}\right)^2+\left(1-\frac{2n_1^2}{(n_1+1)^2n_1^2}\right)(g-1)\\
-\frac2{(n_1+1)^2n_1^2}.
\end{multline}
Thus, for $k=(n+1)^2$ and
\begin{multline}\label{nnn} n_1 + \left\lceil \frac{n_1 g}{n_1 + 1} \right\rceil \leq d_1< \frac{n_1}{2}\left(1+\frac1{n_1}\right)^2+\frac{n_1}{2}\left(1-\frac{2n_1^2}{(n_1+1)^2n_1^2}\right)(g-1)\\
-\frac{n_1}{(n_1+1)^2n_1^2},
\end{multline}
 $\beta^k(\cU_1,\cU_1)<0$.

 For example, for $g=11$ the triplet $(n_1,d_1,n_1+1)=(3,12,4)$  satisfies the \ref{nnn}. Hence, $S(3,12,4)\ne \emptyset $ and for $k=16$, $B^k(\mathcal{U}_1, \mathcal{U}_1)\ne \emptyset $ and \(\beta^k(\mathcal{U}_1, \mathcal{U}_1)<0\).

\end{em}\end{ex}

\begin{ex}\label{ex4}\begin{em}
For a more refined calculation, we fix $(n_1,d_1,k_1):=(2,5,2)$ and take \(E_1 \in B(2,5,2)\). For \(n_2 \ge 2\), let
\[
d_2 = d_{\min} := n_2 + \left\lceil \frac{n_2 g}{n_2 + 1} \right\rceil .
\]
 Then, \(S(1, d_{\min}, n_2+1)\ne \emptyset \). Again, the non-emptiness of  $B^k(\mathcal{U}_1, \mathcal{U}_2)$ follows from Theorem \ref{tl}, since $$\mu(E_1)=\frac{5}{2} < n_2 + \left\lceil \frac{n_2 g}{n_2 + 1} \right\rceil =d_{\min}. $$
  It is easy to see that no \(n \ge 2\) with \(g \le 4\) satisfies \(\beta^k(\mathcal{U}_1, \mathcal{U}_2)<0\). Moreover,
\[
\beta^{2(n+1)}(\mathcal{U}_1, \mathcal{U}_2) < 0 \quad \Longleftrightarrow \quad g \ge  g_{0},
\]
where \(g_{0}\) denotes the minimal possible value of \(g\) in \eqref{eq099}. For $n_1=8,9,10$, $g_0=5$ and for $n_1=6,7,8$, $g_0$ is $6$. However, to avoid overlap with \cite[Theorem 4.3]{bpn} we assume $g \ge \max\{n+2, g_{0} \}$.
The following table specifies the lower bound of $g$ for each rank between \(2 \le n_2 \le 10\). In that range, new points are obtained with $\beta^{2(n+1)}(\mathcal{U}_1, \mathcal{U}_2) < 0$,
\smallskip
\[
\begin{array}{|c|c|}
\hline
 & \\
 n_2 & g  \\

\hline
2   & g\geq  9  \\
3 & g\geq   7  \\
4 & g\geq   6  \\
5 & g\geq    7   \\
6 & g\geq  8    \\
7 & g\geq  9  \\
8 & g\geq  10  \\
9 &  g\geq   11 \\
10 & g\geq  12 \\
\hline
\end{array}
\]
\centerline{Table 2}
\smallskip

Moreover, $B^k(\mathcal{U}_1, \mathcal{U}_2)\ne \emptyset $ implies that $$\widetilde{B}(2n_2,2d_{min}+5n_2, 2(n_2+1))\ne \emptyset .$$ Let $$\mu _0=\mu _1+\mu _2 =\frac{5}{2}+\frac{d_{min}}{n_2} \ \ \mbox{and} \ \  \lambda _0=\lambda _1\lambda_2= 1+\frac{1}{n_2}.$$ For example, for $g=10$, the points $(\mu _0,\lambda _0)$ with $(n_2,d_{min},n_2+1)$ for $n_2=5,6,7,8$  are outside the BPN and BMNO regions in the BN map, since $\mu _1>2$ and $$\lambda _0> f_{10}(\mu _0 ),$$ where $f_{10}(\mu _0 ),$ is the top boundary of the BMNO region in $\mu _0$ (see \cite{bmno} and \cite{bpn}). Therefore, the points $(\mu _0,\lambda _0)$ define new points in the BN- map. The same conclusion can be obtain for some cases when $d>d_{min}.$ For example if $(n_2,d_2, n_2)$ is one of the  following triples $$(6, 16, 7), (7, 17, 8), (7, 18, 8), (7, 19, 8), (8, 18, 9), (8, 19, 9), (8, 20, 9),(8, 21, 9)$$ the points $(\mu _0,\lambda _0)$ are outside the known region.  We expect that, by using the dual span and $\mu _1>2$, it will be possible to obtain a new region on the BN map.
\end{em}\end{ex}

\begin{ex}\label{ex5}\begin{em}
Using the results of \cite{bmgno}, additional examples can be constructed. Let \(C\) be a general curve of odd genus \(g \ge 5\). By \cite[Theorem~5.2(ii), (iii)]{bmgno}, we have
\[
S(2, g+3, 4) \neq \emptyset.
\]
Taking $(n,d_2,n+2)$ as $(2, g+3, 4)$ and, for example,  \((n_1,d_1,k_1)= (5,12,6)\) with \(g\geq 7\),  Theorem~\ref{tl} gives non-emptiness of
$
B^k(\mathcal{U}_1, \mathcal{U}_2),
$ since $\frac{12}{5}<\frac{g+3}{2}$. Furthermore, substituting into \eqref{eq99} yields
\[
\beta^{24}(\mathcal{U}_1, \mathcal{U}_2) < 0.
\]

 Now take $(n_1,d_1,k_1)$ as $(2, g+3, 4)$ and $S(1,d_2,n_2+1)\ne \emptyset$. In this case,  for any $n_2 \geq 2$, $$\frac{g+3}{2}< n_2 + \left\lceil \frac{n_2 g}{n_2 + 1} \right\rceil \leq d_2 ,$$ hence,
$B^k(\mathcal{U}_1, \mathcal{U}_2) \ne \emptyset  .$ The inequality \eqref{eq99} gives the upper bound $$d_2 <\frac{(n_2+1)}{2}+n_2(g+1)-\frac{n_2}{2}(g+3)-\frac{(4+n_2^2)(g-1)+2}{8(n_2+1)}$$ for $ \beta^k(\mathcal{U}_1, \mathcal{U}_2) < 0. $

\end{em}\end{ex}

\begin{ex}\label{ex61}\begin{em}
From \cite{bmgno} we know that for $g=6$, $S(2,10,5)\ne \emptyset$.  By Theorems \ref{l3} and \ref{t5} one may take,  $B(n_1,d_1,n_1+1)\ne \emptyset $ with
$$  n_2 + \left\lceil \frac{n_2 g}{n_2 + 1} \right\rceil \leq d_1 < 5n_1 .$$ Hence, for any $E_1 \in B(n_1,d_1,k_1)$,  $\mu (E_1)<\frac{10}{2}$ and from Theorem~\ref{tl}
$B^k(\mathcal{U}_1, \mathcal{U}_2) \ne \emptyset $ and $\beta ^k(\mathcal{U}_1, \mathcal{U}_2) <0$. For example, we can take $B(4,10,5)$ or B(2,6,3). In these cases, $g>n_1+1$   and  $B^k(\mathcal{U}_1, \mathcal{U}_2) \ne \emptyset $
with $ \beta^k(\mathcal{U}_1, \mathcal{U}_2) < 0. $
\end{em}\end{ex}

\begin{rem}\begin{em}\label{ra}
Further examples can be obtained using the isomorphisms
 \begin{itemize}
 \item $B^k(\cU_1,\cU_2)\cong B^k(\cU_2,\cU_1),$
 \item $B^k(\cU_1,\cU_2)\cong B^{k_1}(\cU_1^*,\cU_2^*\otimes p_C^*K_C)$.
 \item $B^k(\cU_1,\cU_2)\cong B^k(\cU_1\otimes p_C^*L^*,\cU_2\otimes p_C^*L)$, where $L$ is any line bundle,
 \end{itemize}
However, these isomorphisms have no implications on the BN map, as the  regions BMNO and BPN are, by definition, invariant under Serre duality.
\end{em}\end{rem}

\section{Examples using kernel bundles}\label{kernel}
\subsection{The construction}\label{ss51} In this section, we construct further examples of twisted BN loci,
including some with negative BN number. The argument generalises that of \cite[Section 5]{bpn} and we begin with a
generalisation of \cite[Proposition 5.1]{bpn}. This again forms part of \cite[Remark 5.5]{br}.
In this case we consider the morphism

\begin{equation}\label{eq22}
\Phi: B(n_1,d_1,k_1)\times S(n,d,v)\lra M_1\times M_2:(E_1,E)\mapsto (E_1,D_{E,V}^*),
\end{equation}
and we want to give conditions for $Im \Phi \ne \emptyset$ and $ Im \Phi \subset B^k(\cU_1,\cU_2) .$

\begin{prop}\label{t2}
Suppose that $E_1$ has rank $n_1$ and degree $d_1$ with $h^0(E_1)\ge k_1$ and that $E$ and $D_{E,V}^*$ are as in \eqref{eq6}. If
\begin{equation}\label{eq7}
k\le vk_1 - h^0(E_1\otimes E),
\end{equation}
then $h^0(E_1\otimes D_{E,V}^*)\ge k$.
\end{prop}
\begin{proof} Tensoring \eqref{eq6} by $E_1$ and taking global sections, we obtain
\[h^0(E_1\otimes D_{E,V}^*)\ge v h^0(E_1)-h^0(E_1\otimes E).\]
This gives the result.
\end{proof}

\begin{cor}\label{c1}
Suppose that  $C$ is a smooth curve of genus $g\ge 2$ and there exists $E_1\in B(n_1,d_1,k_1)$ with $h^1(E_1\otimes E)=0$. Suppose further that
\begin{equation}\label{eq62}
k\le vk_1-nd_1-n_1d+nn_1(g-1).
\end{equation}
Let  $(n_2,d_2)=(v-n,-d)$ and suppose that $S(n,d,v)$ is defined as in \eqref{eq60} with $S(n,d,v)\ne\emptyset$. Then the image of the morphism
\begin{equation}\label{eq221}
B(n_1,d_1,k_1)\times S(n,d,v)\lra M_1\times M_2:(E_1,E)\mapsto (E_1,D_{E,V}^*)
\end{equation}
is non-empty and  contained in $B^k(\cU_1,\cU_2)$; in particular,
\[B^k(\cU_1,\cU_2)\ne\emptyset.\]
\end{cor}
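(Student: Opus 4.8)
The plan is to deduce Corollary \ref{c1} directly from Proposition \ref{t2}, so the main task is to verify that the hypotheses of the corollary guarantee those of the proposition, and then to translate the conclusion into a statement about the universal locus $B^k(\cU_1,\cU_2)$. First I would fix $E_1\in B(n_1,d_1,k_1)$ with $h^1(E_1\otimes E)=0$, as provided by hypothesis, and pick any $(E,V)\in S(n,d,v)$, which is non-empty by assumption. Since $(n_2,d_2)=(v-n,-d)$, the bundle $D_{E,V}^*$ has the correct rank and degree to define a point of $M_2=M(n_2,d_2)$; note that stability of $D_{E,V}$ (part of the definition of $S(n,d,v)$) is equivalent to stability of its dual $D_{E,V}^*$, so this is a genuine point of $M_2$.

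The key step is to check inequality \eqref{eq7}, namely $k\le vk_1-h^0(E_1\otimes E)$. Here I would invoke the vanishing $h^1(E_1\otimes E)=0$ together with Riemann--Roch for the bundle $E_1\otimes E$, which has rank $nn_1$ and degree $n d_1+n_1 d$. This gives
\[
h^0(E_1\otimes E)=n d_1+n_1 d - nn_1(g-1).
\]
Substituting into \eqref{eq7}, the required inequality becomes exactly
\[
k\le vk_1-nd_1-n_1 d+nn_1(g-1),
\]
which is precisely hypothesis \eqref{eq62}. Hence Proposition \ref{t2} applies and yields $h^0(E_1\otimes D_{E,V}^*)\ge k$.

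It then remains to repackage this as the statement about the morphism \eqref{eq221}. For the chosen pair $(E_1,(E,V))$, the image point $(E_1,D_{E,V}^*)\in M_1\times M_2$ satisfies $h^0(E_1\otimes D_{E,V}^*)\ge k$, so by the definition \eqref{eq1} it lies in $B^k(\cU_1,\cU_2)$. Since $(E_1,E)$ was an arbitrary point of $B(n_1,d_1,k_1)\times S(n,d,v)$ with the stated vanishing, and the vanishing $h^1(E_1\otimes E)=0$ is an open condition that holds for at least one $E_1$, the image of the morphism is non-empty and contained in $B^k(\cU_1,\cU_2)$; in particular $B^k(\cU_1,\cU_2)\ne\emptyset$. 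I expect the only point requiring genuine care to be the Riemann--Roch computation of $h^0(E_1\otimes E)$ and the verification that \eqref{eq62} is exactly the translation of \eqref{eq7} under that vanishing; everything else is bookkeeping from the definitions and from the dual span exact sequence \eqref{eq6}.
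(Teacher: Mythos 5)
Your proposal is correct and follows exactly the paper's own argument: the vanishing $h^1(E_1\otimes E)=0$ plus Riemann--Roch for $E_1\otimes E$ (rank $nn_1$, degree $nd_1+n_1d$) gives $h^0(E_1\otimes E)=nd_1+n_1d-nn_1(g-1)$, so hypothesis \eqref{eq62} becomes precisely condition \eqref{eq7} of Proposition \ref{t2}, which then yields $h^0(E_1\otimes D_{E,V}^*)\ge k$ and hence the conclusion. Your added remarks (stability of $D_{E,V}^*$ dual to that of $D_{E,V}$, and the bookkeeping identifying the image point with a point of $B^k(\cU_1,\cU_2)$) are consistent with, and slightly more explicit than, the paper's two-line proof.
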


\begin{proof}
Since $h^1(E_1\otimes E)=0$, we have $h^0(E_1\otimes E)=nd_1+n_1d-nn_1(g-1)$. The corollary now follows at once from Proposition \ref{t2}.
\end{proof}

We know many cases where $B(n_1,d_1,k_1)\ne\emptyset$ (see \cite[Section 2]{bpn} and Subsection \ref{ss23}). In order to generate examples from Corollary \ref{c1}, we need to know that the right-hand side of \eqref{eq62} is positive and that $S(n,d,v)\ne\emptyset$. The former is easy to arrange; for example we could take $k_1>n_1$, $V$ of small codimension and $d$ sufficiently large (we will be more precise in Subsection \ref{ss52}). For the latter, in addition to the examples covered by \cite[Theorem 5.3]{bpn}, we have many examples with $n=1$ (see Theorem \ref{l3}).

\begin{rem}\label{r95}\begin{em}
If there does not exist $E_1\in B(n_1,d_1,k_1)$ with $h^1(E_1\otimes E)=0$, we could define $m$ to be the minimum value of $h^1(E_1\otimes E)$ for such $E_1$ and replace the left-hand side of \eqref{eq62} by $k+m$. Note also that $h^1(E_1\otimes E)=0$ for sufficiently large $d$.
\end{em}\end{rem}

\begin{theorem}\label{t4} Suppose that  $C$ is a smooth curve of genus $g\ge 2$, $n_1\ge2$, $k_1>n_1$ and there exists $E_1\in B(n_1,d_1,k_1)$ with $h^1(E_1\otimes E)=0$. Suppose further that  \eqref{eq62} holds. Let  $(n_2,d_2)=(v-n,-d)$ and suppose that $S(n,d,v)$ is defined as in \eqref{eq60} with $S(n,d,v)\ne\emptyset$. If, in addition, $k=d(k_1-n_1)-e$ and $v=d-n(g-1)-f$, where
\begin{equation}\label{eq70}
 n(g-1)(k_1-n_1)+nd_1+fk_1\le e<d(k_1-n_1),
\end{equation}
then,  for any fixed values of $n_1$, $d_1$, $k_1$, $n$, $e$ and $f$ satisfying \eqref{eq70}, and $d\gg0$, $B^k(\cU_1,\cU_2)\ne\emptyset$. Moreover, if
\begin{equation}\label{eq34}
 d_1<k_1+n_1(g-1)-\frac{g-1}{k_1-n_1},
\end{equation}
then
\begin{equation}\label{eq25}\beta^k(\cU_1,\cU_2)<0.
\end{equation}
\end{theorem}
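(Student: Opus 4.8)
The plan is to build the proof on Corollary \ref{c1}, which already supplies everything except the explicit parametrisation and the two inequalities to be verified. First I would observe that the hypotheses of Corollary \ref{c1} are satisfied: we have $E_1\in B(n_1,d_1,k_1)$ with $h^1(E_1\otimes E)=0$, the numerical bound \eqref{eq62} holds by assumption, $(n_2,d_2)=(v-n,-d)$, and $S(n,d,v)\ne\emptyset$. Hence the image of the morphism \eqref{eq221} is contained in $B^k(\cU_1,\cU_2)$ and $B^k(\cU_1,\cU_2)\ne\emptyset$. The only subtlety for the non-emptiness conclusion is that the substitutions $k=d(k_1-n_1)-e$ and $v=d-n(g-1)-f$ are consistent with \eqref{eq62}: substituting these into \eqref{eq62} and simplifying should reduce exactly to the left-hand inequality $n(g-1)(k_1-n_1)+nd_1+fk_1\le e$ in \eqref{eq70}, while the right-hand inequality $e<d(k_1-n_1)$ merely guarantees $k>0$ so that the locus $B^k$ is meaningfully defined. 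I would also note that for $d\gg0$ the condition $h^1(E_1\otimes E)=0$ is automatic (as in Remark \ref{r95}) and that $S(n,d,v)\ne\emptyset$ can be arranged for large $d$, so the phrase ``$d\gg0$'' is what makes the construction admissible for the fixed data $n_1,d_1,k_1,n,e,f$.

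Next I would turn to the inequality \eqref{eq25}, namely $\beta^k(\cU_1,\cU_2)<0$. The cleanest route is to feed the substituted values into the formula \eqref{eq2} for $\beta^k(\cU_1,\cU_2)$, treating everything as a function of $d$ with $n_1,d_1,k_1,n,e,f$ fixed. Since $k=d(k_1-n_1)-e$ grows linearly in $d$ and $\beta^k$ contains a term $-k^2$ (from $-k\cdot k$ in \eqref{eq2}), the dominant contribution for large $d$ is $-(k_1-n_1)^2 d^2$, which is strictly negative whenever $k_1>n_1$. The point of hypothesis \eqref{eq34} is presumably to control the \emph{subleading} coefficient: I expect that after collecting the coefficient of the linear-in-$d$ term of $\beta^k$, the requirement that this coefficient keep $\beta^k$ negative (once the leading $d^2$ term is accounted for, or in the borderline regime) reduces precisely to $d_1<k_1+n_1(g-1)-\frac{g-1}{k_1-n_1}$. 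Thus the strategy is to expand $\beta^k$ as a polynomial in $d$, identify its leading and next-to-leading coefficients, and show that \eqref{eq34} forces the whole expression below zero for all sufficiently large $d$.

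The main obstacle will be the bookkeeping in the second part: carrying out the substitution of both $k$ and $v$ (hence $d_2=-d$, $n_2=v-n$, and the slopes $\mu_2=-d/(v-n)$, $\lambda_2=v/(v-n)$) into \eqref{eq2} and tracking how the genus term, the $n_1d_2+n_2d_1$ cross term, and the $n_1n_2(g-1)$ term combine. The danger is that $n_2=v-n=d-n(g-1)-f$ itself grows with $d$, so $\beta^k$ is not simply a polynomial in $d$ with fixed coefficients — both $k$ and $n_2$ scale linearly, and the leading behaviour comes from the product structure in \eqref{eq2}. I would therefore group the terms carefully as $\beta^k=\dim M_1+\dim M_2-k(k-n_2d_1-n_1d_2+n_1n_2(g-1))$ and analyse the bracketed factor $k-n_2d_1-n_1d_2+n_1n_2(g-1)$ on its own: showing this factor is positive for $d\gg0$ (so that $-k\cdot(\text{positive})<0$) is the crux, and it is here that \eqref{eq34} enters. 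Concretely, I anticipate that the bracket, divided by $d$, tends to a limit whose sign is governed exactly by the gap $k_1+n_1(g-1)-\frac{g-1}{k_1-n_1}-d_1$, so that \eqref{eq34} is the sharp threshold. Verifying this limiting computation, and confirming that $\dim M_1+\dim M_2$ (which also grows, since $\dim M_2=n_2^2(g-1)+1$ is quadratic in $d$) does not overwhelm the negative $-k^2$-type term, is the delicate step that I would carry out with explicit leading-order asymptotics.
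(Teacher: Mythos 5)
Your first paragraph is correct and coincides with the paper's own argument: the substitution $k=d(k_1-n_1)-e$, $v=d-n(g-1)-f$ turns \eqref{eq62} into the left-hand inequality of \eqref{eq70}, the right-hand inequality gives $k\ge1$, and Corollary \ref{c1} then yields non-emptiness. The gap is in your treatment of $\beta^k(\cU_1,\cU_2)<0$, where you misidentify the mechanism by which \eqref{eq34} enters. First, your claim that ``the dominant contribution for large $d$ is $-(k_1-n_1)^2d^2$, which is strictly negative whenever $k_1>n_1$'' cannot be right: if it were, \eqref{eq34} would be superfluous. Second, your proposed crux --- show the bracket $k-n_2d_1-n_1d_2+n_1n_2(g-1)$ is positive so that $-k\cdot(\text{positive})<0$ --- does not suffice, because $\dim M_2=n_2^2(g-1)+1$ with $n_2=d-ng-f$ contributes $+(g-1)d^2$; and moreover the limit of that bracket divided by $d$ is $k_1+n_1(g-1)-d_1$, \emph{not} $k_1+n_1(g-1)-\frac{g-1}{k_1-n_1}-d_1$ as you anticipate. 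Positivity of the bracket is strictly weaker than \eqref{eq34} and compatible with $\beta^k>0$ for all large $d$.

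The correct accounting, which is what the paper does (deferring details to the analogous computation in \cite[Theorem 5.3]{bpn}), is to assemble the full coefficient of $d^2$: the term $-k\cdot(\text{bracket})$ contributes $-(k_1-n_1)(k_1-n_1-d_1+n_1g)$ and $\dim M_2$ contributes $g-1$, so $\beta^k$ is a quadratic in $d$ with leading coefficient
\[
g-1-(k_1-n_1)(k_1-n_1-d_1+n_1g),
\]
which is \eqref{eq77}. Since $k_1>n_1$, this coefficient is negative if and only if $k_1-n_1-d_1+n_1g>\frac{g-1}{k_1-n_1}$, i.e.\ if and only if \eqref{eq34} holds; only then does $d\gg0$ force $\beta^k(\cU_1,\cU_2)<0$. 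So the threshold \eqref{eq34} arises from the competition between the quadratic growth of $\dim M_2$ and that of $-k\cdot(\text{bracket})$, not from the sign of the bracket itself. Your closing sentence gestures at exactly this reconciliation but leaves it undone, and the intermediate claims you commit to would steer the computation to a wrong (weaker) condition.
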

\begin{proof}  Note that \eqref{eq70} ensures that we can have $k\ge1$ in \eqref{eq62}. It now follows directly from Corollary \ref{c1} that $B^k(\cU_1,\cU_2)\ne\emptyset$ .
The rest is proved exactly as for \cite[Theorem 5.3]{bpn}. The formula for $\beta^k(\cU_1,\cU_2)$ is more complicated, but it remains a quadratic in $d$ with leading coefficient
\begin{equation}\label{eq77}
g-1-(k_1-n_1)(k_1-n_1-d_1+n_1g).
\end{equation}
 So $\beta^k(\cU_1,\cU_2)<0$ for $d\gg0$ provided that \eqref{eq34} holds.
\end{proof}

\begin{rem}\label{r4}\begin{em}
Since $v>n$, we have $f<d-ng$. Note that it is possible that $f$ could be negative, but not under the assumption $d\gg0$.
\end{em}\end{rem}

\subsection{Examples}\label{ss52}

Let $(E,V)\in S(n,d,v)$ and $E_1\in B(n_1,d_1,k_1)$
Tensoring \eqref{eq6} by $E_1$ and taking global sections, we obtain the following exact sequence
$$0\to H^0(E_1\otimes D_{E,V}^*) \to V\otimes H^0(E_1)\to H^0(E\otimes E_1)\to \dots .$$
\begin{rem}\label{exf}\begin{em}
To show that Theorem \ref{t4} provides the examples we are seeking, we need to show that the following conditions are compatible:
\begin{enumerate}
\item $\mu (E_1\otimes D_{E,V}^*)>0$,
\item $h^1(E_1\otimes E)=0$,
\item $vk-h^0(E_1\otimes E)>0$ and
\item $((v-n)^2+n_1^2)(g-1)+2 -vk_1(vk_1-(d(E_1\otimes D_{E,V}^*)+(v-n)n_1(g-1)<0.$
\end{enumerate}
\end{em}\end{rem}
Condition $2)$ follows from the inequality $\mu (E_1\otimes E)>2g-2$.

\begin{ex}\label{ex11}\begin{em}
Let  $C$ be a general curve of genus $g = 6$. From \cite{bmgno} we know that $B(2,10,5)$ is non-empty. Let $E_1\in B(2,10,5)$ and $(L,V)\in S(1,10,5).$ In this case,
\begin{enumerate}
\item $\mu (E_1\otimes D_{L,V}^*)= 5-\frac{10}{4}>0.$
\item $\mu (E_1\otimes L)= 5+ 10 > 10=2g-2$. Therefore, $h^1(E_1\otimes L)=0$.
\item $h^0(E_1\otimes L)= d(E_1\otimes L)+2\cdot (1-g)= 20 $, hence $vk-h^0(E_1\otimes E)=25-20=5>0$.
\end{enumerate}
 Thus, $B^5(\cU_1,\cU_2)\ne \emptyset $ and a straightforward calculation gives $\beta^k(\cU_1,\cU_2)<0$.
 \end{em}\end{ex}

 In the case $k_1=n_1+1$ we possess more precise statements concerning the possible values of $d_1$.

\begin{cor}\label{t3}
Let  $C$ be a Petri curve of genus $g\ge 3$ ,$n_1\ge2$, $k_1=n_1+1$ and $(n_2,d_2)=(d-ng,-d)$ with $d\ge 2ng$.  Write $e:=d-k$ and let $(n_2,d_2)=(d-ng,-d)$. Suppose further that
\begin{equation}\label{eq71}
e\ge n(g-1)+nd_1
\end{equation}
and
\begin{equation}\label{eq8}
 n_1+\frac{n_1g}{n_1+1}\le d_1<n_1+g+\frac{(n_1^2-n_1-2)g}{2(n_1-1)^2}.
\end{equation}
Then, for any fixed value of $e$ satisfying \ref{eq71} and $d\gg0$, $B^k(\cU_1,\cU_2)\ne\emptyset$ and $\beta^k(\cU_1,\cU_2)<0$. In particular, $B^k(\cU_1,\cU_2)$ is of negative expected dimension and its non-emptiness cannot be obtained from Theorem \ref{t4}.
\end{cor}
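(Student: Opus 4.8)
The plan is to read Corollary~\ref{t3} as the specialisation of Theorem~\ref{t4} to $k_1=n_1+1$ and $f=0$, and then to verify that the hypotheses of that theorem hold under the present assumptions. The matching of parameters is forced: the prescription $(n_2,d_2)=(d-ng,-d)$ gives $n_2=v-n=d-ng$, hence $v=d-n(g-1)$, so that $f=d-n(g-1)-v=0$; moreover $k_1-n_1=1$, so the relation $k=d(k_1-n_1)-e$ of Theorem~\ref{t4} becomes $k=d-e$, in agreement with the definition $e:=d-k$. I would organise the argument as: (a) the geometric inputs $S(n,d,v)\ne\emptyset$, existence of $E_1$, and $h^1(E_1\otimes E)=0$; (b) the reduction of the numerical conditions \eqref{eq62} and \eqref{eq70} to \eqref{eq71}; and (c) the implication \eqref{eq8}$\Rightarrow$\eqref{eq34}.

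First I would supply the geometric inputs. Since $d\ge 2ng$, \cite[Theorem 2.5]{bpn} gives a general stable bundle $E$ of rank $n$ and degree $d$ for which $D_E=D_{E,H^0(E)}$ is stable; as $h^1(E)=0$ we have $v=h^0(E)=d-n(g-1)$, so $(E,H^0(E))\in S(n,d,v)$ and $S(n,d,v)\ne\emptyset$. For $E_1$, the lower bound in \eqref{eq8} is exactly $\beta(1,d_1,n_1+1)\ge0$ and its upper bound is case~(f) of Remark~\ref{r8}; hence, by Remark~\ref{r22}, on the Petri curve $C$ the locus $B(n_1,d_1,n_1+1)$ is non-empty and contains a stable bundle $E_1$. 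Finally, for $d\gg0$ one has $\mu(E_1\otimes E)=\tfrac{d_1}{n_1}+\tfrac{d}{n}>2g-2$, and since $E_1\otimes E$ is semistable, Serre duality gives $h^1(E_1\otimes E)=0$.

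Next I would carry out the numerical reduction. Substituting $k=d-e$, $v=d-n(g-1)$ and $k_1=n_1+1$ into the right-hand side of \eqref{eq62} collapses it to $d-n(g-1)-nd_1$, so \eqref{eq62} is equivalent to $e\ge n(g-1)+nd_1$, which is precisely \eqref{eq71}; the remaining upper bound $e<d(k_1-n_1)=d$ in \eqref{eq70} holds automatically once $d\gg0$ with $e$ fixed. With $k_1-n_1=1$ the condition \eqref{eq34} reads $d_1<(n_1-1)g+2$, so I must check that the case-(f) bound satisfies $n_1+g+\tfrac{(n_1^2-n_1-2)g}{2(n_1-1)^2}\le(n_1-1)g+2$. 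Writing $n_1^2-n_1-2=(n_1-2)(n_1+1)$, this reduces for $n_1\ge3$ to $(n_1+1)g\le2(n_1-1)^2(g-1)$, which holds for all $g\ge3$, the two bounds coinciding when $n_1=2$. Granting this, Theorem~\ref{t4} yields $B^k(\cU_1,\cU_2)\ne\emptyset$, and since the leading coefficient \eqref{eq77} of the quadratic in $d$ equals $d_1-(n_1-1)g-2<0$, also $\beta^k(\cU_1,\cU_2)<0$ for $d\gg0$.

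The hard part will be not any single deduction but keeping the two parameter sets exactly aligned, and in particular confirming that the sharper case-(f) bound of \eqref{eq8} dominates \eqref{eq34}: this is what allows the Petri-curve existence of $E_1$ and the negativity of $\beta^k$ to hold for the same range of $d_1$. The closing assertion is then immediate: $\beta^k(\cU_1,\cU_2)$ is by definition the expected dimension of $B^k(\cU_1,\cU_2)$, so $\beta^k<0$ exhibits a non-empty locus of negative expected dimension, whose non-emptiness is therefore not forced by positivity of the Brill--Noether number and provides a genuinely new example.
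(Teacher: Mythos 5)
Your proposal is correct and follows essentially the same route as the paper: specialise Theorem \ref{t4} to $k_1=n_1+1$, $f=0$ (so $k=d-e$ and \eqref{eq62}, \eqref{eq70} collapse to \eqref{eq71}), obtain $E_1$ from the Petri-curve results behind Remark \ref{r8}(f) (i.e.\ \cite[Theorem 4.7]{bbn2}), and check that the upper bound of \eqref{eq8} implies \eqref{eq34}, i.e.\ $n_1+g+\frac{(n_1^2-n_1-2)g}{2(n_1-1)^2}\le(n_1-1)g+2$. The only difference is that you explicitly verify the inputs the paper leaves implicit ($S(n,d,v)\ne\emptyset$ via $d\ge2ng$ and $h^1(E_1\otimes E)=0$ for $d\gg0$), which is a welcome addition rather than a deviation.
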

\begin{proof} It follows from \eqref{eq8} and \cite[Theorem 4.7]{bbn2} that $B(n_1,d_1,n_1+1)\ne\emptyset$. Now note that
\[n_1+g+\frac{(n_1^2-n_1-2)g}{2(n_1-1)^2}\le(n_1-1)g+2.\] The result follows from \eqref{eq71} and Theorem \ref{t4}.
\end{proof}

\begin{ex}\label{ex6}\begin{em}If $n_1=2$, the second inequality in \eqref{eq8} is not needed \cite[Theorem 7.1]{bbn1}. So $d_1$ can be any integer in the range
\begin{equation}\label{eq74}
2+\left\lceil\frac{2g}3\right\rceil\le d_1\le g+1.
\end{equation}
The cases $n_1=3$ and $n_1=4$ are similar \cite[Theorems 7.2, 7.3]{bbn1} and the range for $d_1$ is
\begin{equation}\label{eq72}
n_1+\left\lceil\frac{n_1g}{n_1+1}\right\rceil\le d_1\le(n_1-1)g+1.
\end{equation}
For $n_1\ge5$, we certainly have $n_1^2-n_1-2>(n_1-1)^2$, so all values in the range
\[n_1+\left\lceil\frac{n_1g}{n_1+1}\right\rceil\le d_1\le n_1+\left\lfloor\frac{3g}2\right\rfloor\]
are possible. If $g\ge2n_1-4$, then the full range \eqref{eq72} is permitted (see \cite[Theorem 6.1]{bbn2}.
\end{em}\end{ex}

\begin{rem}\label{r5}\begin{em}
(i) For general $C$, Theorem \ref{l3} implies that Corollary \ref{t3} is valid with \eqref{eq72} replacing \eqref{eq8}.

(ii) When $g=2$, we know that $B(n_1,d_1,n_1+1)\ne\emptyset$ if and only if $d_1\ge n_1+2$ and $d_1\ne2n_1$ (see \cite[Theorem 8.2]{bbn1}. The conclusions of Corollary \ref{t3} are therefore valid and yield examples provided $n_1\ge3$. If $n_1=2$, \eqref{eq74} cannot be satisfied and Corollary \ref{t3} yields no examples.
\end{em}\end{rem}

\subsection{Examples with $n=1$}\label{ss53}
The results of the previous subsections are valid, in particular,  when $n=1$. However, it is also possible to consider generated linear systems $(L,V)$, since we know a good deal about the stability of $D^*_{L,V}$ (see Theorems \ref{l1} and \ref{l3}). In the first place, we have the following analogue of Theorem \ref{t4}.

\begin{theorem}\label{t10}
Suppose that $C$ is a smooth curve of genus $g\ge2$, $n_1\ge2$, $k_1>n_1$, $c$ an integer, $1\le c\le g$, and either $d>2g+2c$ or $d=2g+2c$ and $C$ is non-hyperelliptic. Suppose further that $B(n_1,d_1,k_1)\ne\emptyset$ and that
\begin{equation}\label{eq75}
k\le(d-g+1)(k_1-n_1)-ck_1-d_1.
\end{equation}
Let $(n_2,d_2)=(d-g-c,-d)$. Then $B^k(\cU_1,\cU_2)\ne\emptyset$.

If, in addition, $k=d(k_1-n_1)-e$, where
\begin{equation}\label{eq78}
e\ge (g-1)(k_1-n_1)+ck_1+d_1
\end{equation} and \eqref{eq34} holds, then, for any fixed values of $n_1$, $d_1$, $k_1$, $c$ and $e$, and $d\gg0$, $\beta^k(\cU_1,\cU_2)<0$.
\end{theorem}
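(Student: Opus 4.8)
The plan is to deduce Theorem~\ref{t10} from the general machinery of Subsection~\ref{ss51} by specialising the kernel-bundle construction to the case $n=1$, where the stability of $D^*_{L,V}$ is controlled by Mistretta's result (Theorem~\ref{l1}) rather than by the general hypothesis $S(n,d,v)\ne\emptyset$ used in Theorem~\ref{t4}. The first step is to fix a generated linear system $(L,V)$ of type $(d,v)$ with $v=d-g-c$ obtained from a subspace $V\subset H^0(L)$ of codimension $c$. Under the hypothesis that $L$ has degree $d\ge 2g+2c$ (with $d>2g+2c$, or $d=2g+2c$ and $C$ non-hyperelliptic), Theorem~\ref{l1} guarantees that $D_{L,V}$ is stable, so that $(L,V)\in S(1,d,v)$ and in particular $S(1,d,v)\ne\emptyset$. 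Here the dual span bundle $D_{L,V}$ has rank $v-1=d-g-c-1$ and $D^*_{L,V}$ has rank $n_2=d-g-c$ and degree $d_2=-d$, matching the stated values of $(n_2,d_2)$.

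Next I would apply Proposition~\ref{t2} and Corollary~\ref{c1} with $n=1$. The key inequality to verify is that the right-hand side of \eqref{eq7} is at least $k$, i.e. that $k\le v k_1 - h^0(E_1\otimes L)$. The hypothesis $d\ge 2g+2c$ is chosen precisely so that $\mu(E_1\otimes L)$ is large enough to force $h^1(E_1\otimes L)=0$; indeed for $E_1\in B(n_1,d_1,k_1)$ and $L$ of degree $d$ one checks $\mu(E_1\otimes L)=\mu_1+d>2g-2$, whence Riemann--Roch gives $h^0(E_1\otimes L)=d_1+n_1 d - n_1(g-1)$. Substituting this into $v k_1 - h^0(E_1\otimes L)$ with $v=d-g-c$ and simplifying yields exactly the bound $(d-g+1)(k_1-n_1)-ck_1-d_1$ appearing in \eqref{eq75}. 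Thus condition \eqref{eq75} is nothing but the specialisation of \eqref{eq62} to $n=1$, and Corollary~\ref{c1} delivers $B^k(\cU_1,\cU_2)\ne\emptyset$ with the image of the morphism $\Phi$ of \eqref{eq22} contained in $B^k(\cU_1,\cU_2)$.

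For the second assertion, concerning negativity of $\beta^k(\cU_1,\cU_2)$ for $d\gg0$, I would mirror the argument in the proof of Theorem~\ref{t4}. Writing $k=d(k_1-n_1)-e$ and treating $\beta^k(\cU_1,\cU_2)$ via formula \eqref{eq2} as a quadratic in $d$, the leading coefficient is again governed by the expression \eqref{eq77}, namely $g-1-(k_1-n_1)(k_1-n_1-d_1+n_1 g)$; condition \eqref{eq34} is exactly the requirement that this leading coefficient be negative, so that for fixed $n_1,d_1,k_1,c,e$ and $d\gg0$ one obtains $\beta^k(\cU_1,\cU_2)<0$. The lower bound \eqref{eq78} on $e$ plays the same role as \eqref{eq70} did in Theorem~\ref{t4}, ensuring that one can still take $k\ge1$ while respecting \eqref{eq75}.

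The main obstacle, and the only genuinely new ingredient compared with Theorem~\ref{t4}, is the bookkeeping that translates the hypotheses. In Theorem~\ref{t4} the parameter $f$ encoded the relation $v=d-n(g-1)-f$; here with $n=1$ the codimension $c$ takes over this role through $v=d-g-c$, and one must check carefully that the stability range $1\le c\le g$ of Theorem~\ref{l1} is compatible with the positivity requirements on $k$ and $e$. I expect the substitution of $h^0(E_1\otimes L)$ and the verification that \eqref{eq75} matches \eqref{eq62} to be routine but the point requiring most care, since the degree constraint $d\ge 2g+2c$ couples the stability of $D^*_{L,V}$ to the vanishing $h^1(E_1\otimes L)=0$ in a way that is special to the rank-one case.
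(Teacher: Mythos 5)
Your overall strategy is exactly the paper's: produce $(L,V)$ via Mistretta's theorem (Theorem \ref{l1}) so that $D_{L,V}$ is stable, use stability of $E_1\otimes L$ with slope $>2g-2$ to get $h^1(E_1\otimes L)=0$ and hence $h^0(E_1\otimes L)=d_1+n_1d-n_1(g-1)$, feed this into Proposition \ref{t2}, and for the second statement treat $\beta^k(\cU_1,\cU_2)$ as a quadratic in $d$ with leading coefficient \eqref{eq77}, which \eqref{eq34} makes negative. However, there is an off-by-one error in your choice of $v$ that invalidates the central computation as written. Since $d\ge 2g+2c>2g-2$, Riemann--Roch gives $h^0(L)=d-g+1$, so a codimension-$c$ subspace $V\subset H^0(L)$ has $\dim V=v=d-g+1-c$, not $d-g-c$; the paper accordingly takes $(L,V)$ of type $(d,d-g+1-c)$. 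Your setup is internally inconsistent: you assert that $D_{L,V}$ has rank $v-1=d-g-c-1$ while $D^*_{L,V}$ has rank $d-g-c$, but a bundle and its dual have the same rank, namely $v-1$; to get $n_2=v-1=d-g-c$ you must take $v=d-g+1-c$.

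This is not merely cosmetic. With your value $v=d-g-c$, Proposition \ref{t2} gives
\[
vk_1-h^0(E_1\otimes L)=(d-g-c)k_1-\bigl(d_1+n_1d-n_1(g-1)\bigr)=(d-g+1)(k_1-n_1)-ck_1-d_1-k_1,
\]
which falls short of the right-hand side of \eqref{eq75} by exactly $k_1$; so your claim that the substitution ``yields exactly the bound'' \eqref{eq75} is false, and non-emptiness would not follow for $k$ equal to the right-hand side of \eqref{eq75}. Equally, your identification of \eqref{eq75} with the specialisation of \eqref{eq62} to $n=1$ only holds with $v=d-g+1-c$. Once $v$ is corrected, everything else in your argument (including the quadratic-in-$d$ analysis for the second part, which matches the paper's proof of Theorem \ref{t4}) goes through verbatim, so the defect is a fixable bookkeeping slip rather than a wrong idea --- but as written the key inequality is off by $k_1$.
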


\begin{proof}
Let $E_1\in B(n_1,d_1,k_1)$ and let $(L,V)$ be a general linear system of type $(d,d-g+1-c)$. By Theorem \ref{l1}, $(L,V)$ is generated and $D_{L,V}\in B(n_2,d_2,n_2+1)$. Since $E_1\otimes L$ is stable of slope $>2g-2$, $h^1(E_1\otimes L)=0$ and $h^0(E_1\otimes L)=d_1+n_1d-n_1(g-1)$. So, by Proposition \ref{t2},
\begin{eqnarray}\label{eq76}
\nonumber h^0(E_1\otimes D^*_{L,V})&\ge& (d-g+1-c)k_1-(d_1+n_1d-n_1(g-1))\\
&=&(d-g+1)(k_1-n_1)-ck_1-d_1.
\end{eqnarray}
It follows that, if \eqref{eq75} holds, then $B^k(\cU_1,\cU_2)\ne\emptyset$.

For the second part, we deduce that, for fixed values of $n_1$, $d_1$, $k_1$, $c$ and $e$, $\beta^k(\cU_1,\cU_2)$ is again a quadratic in $d$ with leading coefficient given by \eqref{eq77}. It follows that $\beta^k(\cU_1,\cU_2)<0$ for $d\gg0$ if \eqref{eq34} holds.
\end{proof}

The following corollary follows from Theorem \ref{t10} in exactly the same way as Corollary \ref{t3} follows from Theorem \ref{t4}.

\begin{cor}\label{c8}
Suppose that  $C$ is a smooth curve of genus $g\ge 3$, $n_1\ge2$, $k_1>n_1$, $1\le c\le g$ and $(n_2,d_2)=(d-g-c,-d)$ with either $d>2g+2c$ or $d=2g+2c$ and $C$ non-hyperelliptic.
\begin{itemize}
\item[(i)] If  \eqref{eq62} holds and $d_1$ is not divisible by $n_1$, then $B^k(\cU_1,\cU_2)\ne\emptyset$. Moreover, for
any fixed value of $e$ satisfying \eqref{eq78} and $d\gg0$, $\beta^k(\cU_1,\cU_2)<0$.
\item[(ii)] If $C$ is Petri, $k_1=n_1+1$ and \eqref{eq8} holds, then $B^k(\cU_1,\cU_2)\ne\emptyset$. Moreover, for
any fixed value of $e$ satisfying
\[e\ge g-1+c(n_1+1)+d_1\]
and $d\gg0$, $\beta^k(\cU_1,\cU_2)<0$.
\end{itemize}
\end{cor}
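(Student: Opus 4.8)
The plan is to deduce Corollary~\ref{c8} from Theorem~\ref{t10} by the same two-step mechanism used to derive Corollary~\ref{t3} from Theorem~\ref{t4}: first verify that the hypotheses of Theorem~\ref{t10} are met so that the non-emptiness conclusion $B^k(\cU_1,\cU_2)\ne\emptyset$ follows immediately, and then manipulate the inequality \eqref{eq78} bounding $e$ into the cleaner form stated in each part. The key observation that makes Corollary~\ref{t3}'s proof go through is the numerical comparison showing that the upper bound on $d_1$ forces the leading coefficient \eqref{eq77} of the quadratic $\beta^k(\cU_1,\cU_2)$ in $d$ to be negative, equivalently that \eqref{eq34} holds; I expect the analogous comparison to be the crux here as well.

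For part~(i), I would take $E_1\in B(n_1,d_1,k_1)$ with $d_1$ not divisible by $n_1$, and apply Theorem~\ref{t10} directly: the hypotheses $g\ge3$, $n_1\ge2$, $k_1>n_1$, the condition on $d$ relative to $2g+2c$, and $B(n_1,d_1,k_1)\ne\emptyset$ are all in place. Since \eqref{eq62} is assumed, the bound \eqref{eq75} is satisfied (one checks that \eqref{eq62} implies \eqref{eq75} via $h^0(E_1\otimes L)=d_1+n_1 d-n_1(g-1)$ as computed in the proof of Theorem~\ref{t10}), giving $B^k(\cU_1,\cU_2)\ne\emptyset$. For the negativity of $\beta^k$, I would invoke the second part of Theorem~\ref{t10}: for $e$ satisfying \eqref{eq78} and $d\gg0$, the quadratic has negative leading coefficient precisely when \eqref{eq34} holds. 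The role of the non-divisibility of $d_1$ by $n_1$ is to guarantee strict stability of the relevant bundles (so that we stay in $B$ rather than merely $\widetilde B$); I would note this is what allows us to avoid the semistable-only conclusion.

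For part~(ii), I would specialize to $k_1=n_1+1$ and assume $C$ is Petri with \eqref{eq8} holding. Here the existence $B(n_1,d_1,n_1+1)\ne\emptyset$ follows from \eqref{eq8} together with \cite[Theorem 4.7]{bbn2}, exactly as in the proof of Corollary~\ref{t3}. The main computation is to show that the second inequality in \eqref{eq8}, namely $d_1<n_1+g+\frac{(n_1^2-n_1-2)g}{2(n_1-1)^2}$, implies \eqref{eq34} with $k_1=n_1+1$; this reduces to the same estimate
\[
n_1+g+\frac{(n_1^2-n_1-2)g}{2(n_1-1)^2}\le (n_1-1)g+2
\]
quoted in the proof of Corollary~\ref{t3}, after which $d_1<(n_1-1)g+2=k_1+n_1(g-1)-\frac{g-1}{k_1-n_1}$ is immediate upon substituting $k_1-n_1=1$. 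Finally, with $k_1=n_1+1$ the bound \eqref{eq78} simplifies: $(g-1)(k_1-n_1)+ck_1+d_1=g-1+c(n_1+1)+d_1$, which is exactly the stated condition on $e$. Thus for such $e$ and $d\gg0$ we obtain $\beta^k(\cU_1,\cU_2)<0$.

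The step I expect to require the most care is verifying that the assumed bounds on $d_1$ translate correctly into \eqref{eq34}, since the displayed quadratic leading coefficient \eqref{eq77} must be checked to be strictly negative rather than merely nonpositive; the inequality $\frac{(n_1^2-n_1-2)g}{2(n_1-1)^2}\le (n_1-2)g+2-$(lower order) is tight for small $n_1$, so I would double-check the boundary cases $n_1=2,3$ separately, noting that when $n_1=2$ the term $n_1^2-n_1-2$ vanishes and \eqref{eq8} degenerates, which is presumably why Remark~\ref{r5}(ii) flags $n_1=2$ as yielding no examples. Everything else is a mechanical transcription of the argument for Corollary~\ref{t3}, so the novelty lies entirely in confirming these numerical inequalities are preserved under the passage from the kernel-bundle setting \eqref{eq6} to the linear-system setting governed by Theorem~\ref{l1}.
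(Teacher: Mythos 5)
Your overall strategy --- reducing to Theorem \ref{t10} by the same template through which Corollary \ref{t3} is deduced from Theorem \ref{t4} --- is exactly the paper's proof; the paper's entire argument is the remark that the corollary ``follows from Theorem \ref{t10} in exactly the same way as Corollary \ref{t3} follows from Theorem \ref{t4}.'' Your part (ii) is complete and correct on these lines: \cite[Theorem 4.7]{bbn2} supplies $E_1\in B(n_1,d_1,n_1+1)$, the comparison $n_1+g+\frac{(n_1^2-n_1-2)g}{2(n_1-1)^2}\le(n_1-1)g+2=k_1+n_1(g-1)-\frac{g-1}{k_1-n_1}$ (for $k_1=n_1+1$, with the strictness of \eqref{eq8} handling the case of equality at $n_1=2$) converts \eqref{eq8} into \eqref{eq34}, and \eqref{eq78} specialises to the stated bound on $e$. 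Your identification of \eqref{eq62} with \eqref{eq75} in this setting (where $n=1$ and $v=d-g+1-c$) is also correct.

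The gap is in part (i). Theorem \ref{t10} needs the hypothesis $B(n_1,d_1,k_1)\ne\emptyset$, and its negativity conclusion needs \eqref{eq34}; neither appears among the hypotheses of part (i), whose only assumptions on $(n_1,d_1,k_1)$ are \eqref{eq62} and $n_1\nmid d_1$. You simply assert that $B(n_1,d_1,k_1)\ne\emptyset$ is ``in place'' and invoke \eqref{eq34} without deriving it; indeed your closing sentence proposes to check that ``the assumed bounds on $d_1$ translate correctly into \eqref{eq34}'', but part (i) assumes no bound on $d_1$ at all, so there is nothing to translate. To follow the Corollary \ref{t3} template faithfully, part (i) requires the analogue of the role played there by \cite[Theorem 4.7]{bbn2}: a non-emptiness theorem valid on an \emph{arbitrary} smooth curve (of the kind reviewed in \cite[Section 2]{bpn}, in the spirit of \cite{bgn,bmno,m1}) whose hypotheses involve the non-divisibility of $d_1$ by $n_1$ and which produces a stable $E_1$ with $h^0(E_1)\ge k_1$ in a degree range forcing \eqref{eq34}. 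Your alternative reading of the condition $n_1\nmid d_1$ --- that it guarantees stability so that the conclusion lands in $B^k(\cU_1,\cU_2)$ rather than $\widetilde{B}^k(\cU_1,\cU_2)$ --- is unfounded within the construction itself: $E_1$ is stable by the very definition of $B(n_1,d_1,k_1)\subset M_1$, and $D_{L,V}$ is stable by Theorem \ref{l1} precisely because the corollary excludes the case $d=2g+2c$ with $C$ hyperelliptic, so there is no residual semistability issue for the non-divisibility to repair. It must instead feed whatever existence theorem produces $E_1$ (together with the degree restriction yielding \eqref{eq34}), and your proof never identifies that ingredient.
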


This corollary implies that examples of twisted BN loci with negative expected dimension can be constructed exactly as in Example \ref{ex6}.

\begin{ex}\label{ex8}\begin{em} Let $a,b$ and $c$, be positive integers.  Assume $d=2g+2c+a$ with $0\leq c \leq g$.   Let $(L,V)$ be a linear system of type $(d, g+c+a+1)$, i.e.
$$\upsilon :=\dim V= h^0(L)-c=g+c+a+1.$$ From Theorem \ref{l1}, $D_{L,V}$ is stable of rank $g+c+a$.  Let $E_1\in B(n_1,d_1, k_1)$ with $k_1:=n_1+b$ and $2<\frac{d_1}{n_1}.$ Hence, $1)$ and $2)$ of \ref{exf} are satisfied and $$k:=\upsilon \cdot k_1-h^0(L\otimes E_1)= b(g+c+a+1)-d_1-nc.$$
Certainly we can give values to $c$ and $a$ to obtain $k>0$.

Let $g=6$, $c=4$ and $a=6$ and $d=2g+2c+a=26$. Let $(L,V)$ be a generated linear system of type $(d, g+c+a+1)= (26,17)$. From Theorem \ref{l1}, $D_{L,V}$ is stable of rank $g+c+a= 16$, hence, $B(16,26,17) \ne \emptyset$. Take now $E_1\in B(2,10,5)$. In this case
$$\upsilon \cdot k_1-h^0(L\otimes E_1)= b(g+c+a+1)-d_1-nc = 33.$$ Since $3)$ and $4)$ of \ref{exf} are also satisfied, $B^{33}(\cU_1,\cU_2)\ne\emptyset$ and $\beta^{33}(\cU_1,\cU_2)<0$.
\end{em}\end{ex}

We finish with a further example based on \cite[Theorem 1.3]{fla}.
\begin{ex}\label{ex81}\begin{em}
Let $C$ be a general curve of genus $g\ge2$ and suppose that $n_1\ge2$. If $g=2$, assume that $d\ne2(v-1)$. According to Theorem \ref{l3}, if $v\ge3$, there exists a generated linear system $(L,V)$ of type $(d,v)$ such that $D_{L,V}$ is stable if and only if $d\ge v-1+\frac{(v-1)g}v$. Write $v=d-g+1-c$, where we no longer assume that $1\le c\le g$ (indeed, $c$ could be negative). The proof of Theorem \ref{t10} now works provided that $h^1(E_1\otimes L)=0$, which is certainly true if $d_1+n_1d>n_1(2g-2)$. Hence, if we fix $n_1$, $d_1$, $k_1$, $c$ and $e$ such that \eqref{eq78} holds, then $B^k(\cU_1,\cU_2)\ne\emptyset$ and $\beta^k(\cU_1,\cU_2)<0$ for $d\gg0$. The constructions of Example \ref{ex6} now yields many twisted BN loci of negative expected dimension.
\end{em}\end{ex}

\end{document}